\title{Higher homotopy of groups definable in o-minimal structures} 
\author[A.~Berarducci]{Alessandro Berarducci}
\address{Universit\`a di Pisa, Dipartimento di Matematica, Largo Bruno Pontecorvo 5, 56127 Pisa, Italy}
\email{berardu@dm.unipi.it}
\thanks{First author is partially supported by GEOR MTM2005-02568}
\author[M.~Mamino]{Marcello Mamino}
\address{Classe di Scienze - Scuola Normale Superiore, Piazza dei Cavalieri, 7, 56126 Pisa, Italy}
\email{m.mamino@sns.it}
\author[M.~Otero]{Margarita Otero}
\address{Departamento de Matem\'aticas, Universidad Aut\'onoma de Madrid, Francisco Tom\'as y Valiente 7, 28049 Madrid, Spain}
\email{margarita.otero@uam.es}
\thanks{Third author is partially supported by GEOR MTM2005-02568 and Grupos UCM 910444}
\subjclass[2000]{03C64, 57T20, 55P45}
\keywords{definable group, Lie group, o-minimal homotopy}
\date{Sep. 29, 2008. Revised Feb. 10, 2009.}
\DeclareMathOperator{\R}{\mathbb R}
\DeclareMathOperator{\Q}{\mathbb Q}
\DeclareMathOperator{\Z}{\mathbb Z}
\newcommand{\M}{\mbox{$\mathcal{M}$}}
\theoremstyle{plain}
\newtheorem{theorem}{Theorem}
\newtheorem{lemma}[theorem]{Lemma}
\newtheorem{proposition}[theorem]{Proposition}
\newtheorem{corollary}[theorem]{Corollary}
\newtheorem{claim}{Claim}
\newtheorem{fact}[theorem]{Fact}
\theoremstyle{definition}
\newtheorem{remark}[theorem]{Remark}
\newtheorem{definition}[theorem]{Definition}
\newtheorem{example}[theorem]{Example}
\newtheorem{exercise}[theorem]{Exercise}
\numberwithin{theorem}{section}
\newcommand{\bt}{\begin{theorem}}
\newcommand{\et}{\end{theorem}}
\newcommand{\bl}{\begin{lemma}}
\newcommand{\el}{\end{lemma}}
\newcommand{\bfa}{\begin{fact}}
\newcommand{\efa}{\end{fact}}
\newcommand{\bexa}{\begin{example}}
\newcommand{\eexa}{\end{example}}
\newcommand{\bexe}{\begin{exercise}}
\newcommand{\eexe}{\end{exercise}}
\newcommand{\bprop}{\begin{proposition}}
\newcommand{\eprop}{\end{proposition}}
\newcommand{\bp}{\begin{proof}}
\newcommand{\ep}{\end{proof}}
\newcommand{\bc}{\begin{corollary}}
\newcommand{\ec}{\end{corollary}}
\newcommand{\bd}{\begin{definition}}
\newcommand{\ed}{\end{definition}}
\newcommand{\br}{\begin{remark}}
\newcommand{\er}{\end{remark}}
\def\NP{\mathbb{N^+}}
\def\struct{M}
\def\uc#1{\widetilde{#1}}
\def\closure#1{\operatorname{cl}\left({#1}\right)}
\def\ominpi#1#2{\pi_{#1}\left({#2}\right)}
\def\topi#1#2{\pi_{#1}\left({#2}\right)}
\def\H#1#2#3{\operatorname{H}_{#1}\left({#2};{#3}\right)}
\begin{document} 

\begin{abstract}
It is known that a definably compact group $G$ is an extension of a
compact Lie group $L$ by a divisible torsion-free normal subgroup. We show
that the  o-minimal higher homotopy groups of $G$ are isomorphic to the
corresponding higher homotopy groups of $L$. As a consequence, we obtain
that all abelian definably compact groups of a given dimension are
definably homotopy equivalent, and that their universal cover are
contractible.
\end{abstract}

\maketitle

\section{Introduction}
Definable groups in o-minimal structures have been studied by several
authors. 
The class of such groups includes all algebraic groups over either 
a real closed field or an algebraically closed field, but is not limited to such groups. 
Any compact Lie group is isomorphic to a real algebraic subgroup of the general linear group 
$GL(n,\R)$ and as such it also falls within the o-minimal category. On the other hand a non-compact Lie group may or may not be definable ({\em e.g.} the integers are not definable in any o-minimal structure). Working in the o-minimal category can have some advantages even if one  is only interested in the compact Lie groups. Indeed some proofs can be simplified due to the presence of a flexible set of tools like the cell decomposition theorem, the triangulation theorem (holding in the presence of field operations), and a model theoretic notion of ``Euler characteristic'' invariant under definable bijections, not necessarily continuous. The presence of these tools, combined with the general lack of analytical tools (like the exponential map of Lie group theory), has prompted some researchers in the field to adapt to the o-minimal category some of the usual devices of algebraic topology (homology, cohomology, homotopy) in order to investigate the definable groups. A fundamental result obtained in this way is that a definably compact abelian group $G$ of (o-minimal) dimension $d$ has the same torsion subgroup of a classical $d$-dimensional torus \cite{04EO}. By the work of several authors on the so called  ''Pillay's conjecture'' of \cite{04P} every definably compact group $G$ has a canonical type-definable divisible subgroup $G^{00}$ such that $G/G^{00}$ with the ``logic topology'' is a compact Lie group \cite{05BOPP}.  In the abelian case $G/G^{00}$ is a torus and its dimension can be computed by studying the structure of the torsion subgroup. This can be determined as follows. In \cite{08HPP} it is proved that in the abelian case $G^{00}$ is torsion free and therefore, since it is also divisible, $G$ and $G/G^{00}$ have isomorphic torsion subgroups. By the quoted result of \cite{04EO},  one then deduces that the o-minimal dimension of $G$ coincides with the dimension of $G/G^{00}$ as a Lie group. (This continues to hold also in the non-abelian case \cite{08HPP}. Also the fact that $G^{00}$ is divisible and torsion free can be extended to the non-abelian case \cite{07B}.)  
The proof in \cite{04EO} proceeds by studying the (o-minimal) fundamental group $\pi_1(G)$ of $G$, showing that it is isomorphic to $\Z^d$. In the light of Pillay's conjecture this can be equivalently expressed in the form \[\pi_1(G) \cong \pi_1(G/G^{00}) \eqno (1)\]
where the latter is the fundamental group of the Lie group $G/G^{00}$. The nice thing of this reformulation is that it makes sense to ask whether (1) continues to hold in the non-abelian case.
The paper \cite{07pB} deals with the corresponding question for the cohomology groups, showing that for every definably compact group $G$, every $i$, and any constant coefficient group,
\[H^i(G) \cong H^i(G/G^{00}) \eqno (2)\]
 where the left-hand side denotes the the $i$-th o-minimal (singular or sheaf) cohomology group.  
Taking $i=1$ we easily deduce the validity of (1) in the non-abelian case by the o-minimal Poincar\'e-Hurewicz theorem in \cite{04EO} and the corresponding classical result. We actually show in this paper that \[\pi_n(G) \cong \pi_n(G/G^{00}) \eqno (3)\] 
for all $n \geq 1$ and all definably compact groups $G$ (where the left-hand side is the o-minimal $n$-th homotopy group of $G$). The proof of (3) is non-trivial even in the abelian case, where it amounts to show that $$\pi_n(G) = 0\eqno (4)$$ 
for all $n>1$ (recalling that the case $n=1$ has already been dealt with). This would be easy to prove if one could show (by analogy with the torus) that $G$ factors definably into one-dimensional subgroups (as $\pi_n(G)$ would also factor), but in general this is not the case (here we measure the effect of the lack of the exponential maps). Since $\pi_n(G)$ can be proved to be divisible for $n>1$, to prove (4), it suffices to show that $\pi_n(G)$ is finitely generated. Clearly we cannot make a direct appeal to the triangulation theorem since a finite simplicial complex does not need to have a finitely generated $\pi_n$ (although it has finitely generated homology and cohomology groups). What we need is to use  the structure of o-minimal $H$-space of (the triangulated copy of) $G$. Recall that any topological group is an $H$-space and it is known that an $H$-space with finitely generated homology groups has finitely generated higher homotopy groups. This however holds in the classical setting, and we need to transfer the latter to the o-minimal category. To do so we exploit some results of \cite{08pBaO} which allow to transfer the  o-minimal $H$-space structure (but not the group structure) to the classical setting  -- so we can apply the above classical result --  and then go back to the o-minimal setting thanks again to a result in \cite{08pBaO} which establishes a group isomorphism between the classical and o-minimal higher homotopy groups of the relevant spaces.  As a consequence of (4),  we  first obtain that all the abelian definably compact definably connected groups of the same dimension are definably homotopy equivalent and therefore their o-minimal homology groups (over $\mathbb{Z}$) are torsion free. This is done using results of \cite{08pOP} and the o-minimal Whitehead theorem proved in \cite{08pBaO}. Then, we  apply results in \cite{08pBaO2} to  get that the universal covering group of our abelian definable group is contractible (in the  category of locally definable spaces).

Having proved (3) in the abelian case it remains to deal with the non abelian case. We have already discussed the case $n=1$. The case $n>1$ is 
a reduction to the abelian and to the semisimple centreless cases making use of the long exact homotopy sequence of a fibration and again results in \cite{08pBaO}. The whole of section 2 is devoted to prove that a surjective definable morphism $G \to G/H$ of definable groups is a definable fibration (in analogy with the corresponding result for Lie groups). Once we have a definable fibration we obtain a long exact homotopy sequence by \cite{08pBaO}. Of course to be able to compare $\pi_n(G)$ and $\pi_n(G/G^{00})$ we need to work with homotopy sequences both in the o-minimal and in the Lie category. To be able to match the two sides, we exploit the fact that the correspondence $G\mapsto G/G^{00}$ gives rise to a functor from definable groups (and definable homomorphisms) to compact Lie groups (and continuous homomorphisms) which, by \cite{07B}, preserves exact sequences. 

We assume the basics of o-minimality and o-minimal topology, as the reader may find in
\cite{98V}. For a general reference on definable groups see \cite{08O}, and for Lie groups see  \cite{04Bu}.

\subsection{Notations}

We work over an o-minimal expansion \M\ of a real closed field.  Definable
means definable in \M\  with parameters. We take the order topology in
$M$, the product topology in the products $M^n$, and the subspace topology
when working inside some subset of $M^n$. We denote by  $I$  the interval
$[0,1]\subset M$.  If $G$ is a definable group, $\pi_n(G)$ denotes the
o-minimal $n$-th homotopy group of the pointed space $(G, e)$ (see
\cite[section 4]{08pBaO}), where $e$ denotes the identity of $G$. If $G$
is a Lie group $\pi_n(G)$ is the classical $n$-th homotopy group of the
pointed space $(G, e)$, where $e$ denotes the identity of $G$.  Similar
notation for homology and cohomology.  Except in section 2,  \M\ is  a
sufficiently saturated structure.

\section{Definable fibre bundles }

\bd  Let $E$ and $B$ be definable sets. Let $p:E\to B$  be  a definable continuous map. 

We say that $p$ is a {\em definable fibre bundle}  with {\em  fibre} a definable set $F$ if there is a definable finite open definable covering of $B= \bigcup_{i=1}^k U_i$ and for each $i$, a definable homeomorphism $\varphi_i\colon U_i\times F\to p^{-1}(U_i)$ such that $p\circ\varphi_i\colon U_i\times F \to U_i$ is the projection onto the first factor.

 We say that $p$ is a  {\em definable fibration} if $p$ has the homotopy lifting property with respect to all definable sets {\em i.e.},  for every definable set $X$, for every definable homotopy $f:X\times I \to B$ and for every definable map $g: X\to E$ such that $p\circ g= f(-,0)$ there is a definable homotopy $h: X\times I\to E$ such that  $p\circ h=f$ ($h$ is a {\em lifting of} $f$) and $h(-,0)=g(-)$.
\ed

A definable covering map is an example of a definable fibre bundle in
which the fibre is finite, while a projection $p:B\times F\to B$ is an
example of a trivial fibre bundle with arbitrary  fibre. As the reader may
verify, both maps are also definable fibrations (the first one by uniqueness
of path lifting, see \cite[proposition 4.10]{08pBaO}). A less trivial
example of definable fibre bundle is provided by the following.

\begin{lemma}\label{quotienttobundle}
Let $H$ be a definable subgroup of a definable group $G$.
Then the natural map $p: G \to G/H$ is a definable fibre bundle
(with the quotient topology on $G/H$).
\end{lemma}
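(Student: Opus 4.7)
The plan is to reduce everything to the construction of a definable local section of $p$ over a neighborhood of the identity coset $eH$, propagate it to every coset by left translation, and finally extract a finite subcover by means of a definable triangulation. The first reduction is immediate: a definable trivialization $\varphi\colon V\times H\to p^{-1}(V)$ satisfying $p\circ\varphi=\mathrm{pr}_1$ is the same datum as a definable continuous section $\sigma\colon V\to G$ of $p$, via $\varphi(v,h):=\sigma(v)h$ and $\sigma(v):=\varphi(v,e)$. If $\sigma_0\colon V_0\to G$ is a section on an open neighborhood $V_0$ of $eH$, then for every $g\in G$ the translate $V_g:=g\cdot V_0$ is an open neighborhood of $gH$ and $\sigma_g(v):=g\cdot\sigma_0(g^{-1}v)$ is a section on $V_g$, so a section at the identity automatically yields one at every coset.

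The technical heart is the construction of $\sigma_0$. Here I would use that $G$ is a definable manifold and $H$ a closed definable submanifold, both standard facts about definable groups in o-minimal structures (see \cite{08O}). By o-minimal cell decomposition near $e$, in a definable chart in which $H$ is in standard position, one produces a definable subset $S\subset G$ containing $e$, of dimension $\dim G-\dim H$, transverse to $H$ at $e$, such that the multiplication $\mu\colon S\times H\to G$, $(s,h)\mapsto sh$, is a definable homeomorphism onto a definable open (and $H$-saturated) neighborhood $W$ of $H$ in $G$. Setting $V_0:=p(W)\subset G/H$, the map $p|_S\colon S\to V_0$ is then a definable homeomorphism, whose inverse is the desired section $\sigma_0$, and $\varphi_0(v,h):=\sigma_0(v)h$ trivializes $p$ over $V_0$ with fibre $H$.

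To extract a finite subcover of the translation-invariant open cover $\{V_g\}_{g\in G}$ of $G/H$, I would invoke the definable triangulation theorem applied to $G/H$ (a definable set in an o-minimal expansion of a real closed field), and take a sufficiently fine subdivision so that each open star of a vertex is contained in some $V_{g_i}$. The finitely many open stars provide the required finite definable open cover $U_1,\dots,U_k$, over which the maps $\varphi_i(v,h):=g_i\cdot\sigma_0(g_i^{-1}v)\cdot h$ give the local trivializations.

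The main obstacle is the construction of $\sigma_0$. In the classical Lie case one picks a linear complement to $\mathrm{Lie}(H)$ inside $\mathrm{Lie}(G)$ and exponentiates it; in the o-minimal category that tool is absent, so one has to substitute a transversality/cell decomposition argument ensuring that $\mu$ is injective and open around $(e,e)$, and then extend the resulting section to a maximal open domain. Propagation by left translation and the extraction of a finite subcover via triangulation are more routine in comparison.
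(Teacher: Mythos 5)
Your overall architecture (trivializations over $V$ are the same as continuous sections over $V$; sections propagate to all cosets by left translation) is exactly the paper's, but the two steps you yourself flag as the load-bearing ones are where the argument has genuine gaps, and the paper takes a different, much softer route that avoids both. First, the local slice: you assert that cell decomposition near $e$ ``produces'' a definable $S$ of dimension $\dim G-\dim H$ with $(s,h)\mapsto sh$ a homeomorphism onto an open $H$-saturated neighbourhood of $H$. This is the o-minimal analogue of the Lie-theoretic local cross-section theorem, and it is not a formal consequence of cell decomposition: you need openness and (global in the $H$-direction) injectivity of the multiplication map, which classically comes from the inverse function theorem plus a separate argument that $S\cap gH$ is a single point even when $s_2^{-1}s_1$ is a far-away element of $H$. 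None of this is supplied, and it is precisely the hard part. The paper sidesteps it entirely: by definable choice there is a (discontinuous) definable section $s:G/H\to G$, and by generic continuity $s$ is continuous on a \emph{large} open subset $X$ of $G/H$ (large meaning $\dim((G/H)\setminus X)<\dim(G/H)$). No transversality, no implicit function theorem.

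Second, and more seriously, your finite-subcover step is false in the stated generality. The lemma is for arbitrary definable $G$ and $H$, so $G/H$ need not be definably compact; the cover $\{gV_0\}_{g\in G}$ by translates of a neighbourhood of the identity coset then has no finite subcover in general (take $G=(M^2,+)$, $H=M\times\{0\}$, so $G/H\cong(M,+)$: finitely many translates of a bounded interval never cover the line), and the triangulation/open-star refinement is a compactness argument that does not apply. The paper's replacement is a group-theoretic fact rather than a topological one: by \cite[claim 2.12]{00PPS}, finitely many translates of a \emph{large} definable subset of a definable group (or homogeneous space) already cover it. This is why the paper works with a section continuous on a large set rather than on a small neighbourhood of $eH$: largeness is what buys the finite cover, for compact and non-compact $G/H$ alike. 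If you restricted to definably compact $G$ your outline could be repaired, but as written both the construction of $\sigma_0$ and the extraction of the finite cover are missing ideas rather than routine verifications.
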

\begin{proof}
Consider a (discontinuous) definable section $s: G/H \to G$.
Let $X$ be a large subset of $G/H$ (where {\em large} means that that
$\operatorname{dim}((G/H) \setminus X) < \operatorname{dim}(G/H)$)
such that $s$ is continuous on $X$.
Without loss of generality, we can assume $X$ to be open in $G/H$, for,
observe that the interior of $X$ is large, since the dimension of the
boundary of $X$ is strictly less than the dimension of $G/H$.
By \cite[claim 2.12]{00PPS} 
finitely many left translates $X_1, \ldots , X_m$ of $X$ cover $G/H$.
Fix an  $i\leq m$.  We have an induced section $s_i: G/H \to G$ (a conjugate of $s$) 
that is continuous on $X_i$.  Therefore, the  map $\varphi_i\colon X_i \times H\to p^{-1}(X_i) $  defined by 
$ \varphi_i(gH,h) =s_i(gH)h$ is a  definable homeomorphism with its  inverse sending $g$ to $ (gH, (s_i(gH))^{-1}g)$. Finally, note that $p\circ \varphi_i=p_1$, where  $p_1 : X_i \times H \to X_i$ is the projection onto  the first coordinate. 
\end{proof}

This section will be devoted to prove the following.
\begin{theorem}\label{mainfib}
Every definable fibre bundle is a definable fibration.
\end{theorem}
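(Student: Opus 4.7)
The plan is to deduce Theorem~\ref{mainfib} from two complementary ingredients: (A) every trivial fibre bundle is a definable fibration, and (B) being a definable fibration is local on the base with respect to finite definable open covers. Since a definable fibre bundle is, by definition, trivial over each member of some finite open cover of its base, combining (A) and (B) yields the theorem.

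Ingredient (A) is essentially a one-line computation. For $p = \mathrm{pr}_1 : B \times F \to B$, given $f : X \times I \to B$ and $g : X \to B \times F$ with $p \circ g = f(-, 0)$, write $g(x) = (f(x,0), q(x))$ for a definable $q : X \to F$; then $h(x, t) = (f(x, t), q(x))$ is a definable continuous lift of $f$ extending $g$.

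For ingredient (B), induction on the size of the cover reduces us to the case $B = V \cup W$ with $p|_{p^{-1}(V)}$ and $p|_{p^{-1}(W)}$ definable fibrations. Given $f : X \times I \to V \cup W$ and $g : X \to E$, set $A = f^{-1}(V)$ and $C = f^{-1}(W)$, open definable subsets covering $X \times I$. The o-minimal cell decomposition theorem applied to $X \times I$ with the $I$-coordinate distinguished and compatibly with $\{A \setminus C,\ A \cap C,\ C \setminus A\}$ produces a finite partition of $X$ into definable cells $X_\alpha$ and, over each $X_\alpha$, continuous definable functions $0 = \tau_0^\alpha(x) < \tau_1^\alpha(x) < \cdots < \tau_{n_\alpha}^\alpha(x) = 1$ such that each open slab $\{(x, t) : x \in X_\alpha,\ \tau_{j-1}^\alpha(x) < t < \tau_j^\alpha(x)\}$ lies entirely in $A$ or in $C$. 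Over a fixed $X_\alpha$, the lift is built inductively in $j$: the $j$th slab maps into $V$, say, and the HLP for $p|_V$ extends the lift from $t = \tau_{j-1}^\alpha(x)$ through to $t = \tau_j^\alpha(x)$, using the already-defined value at $t = \tau_{j-1}^\alpha(x)$ as initial datum.

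The main obstacle is gluing the cellwise lifts $h^\alpha : X_\alpha \times I \to E$ into a globally continuous definable map $h : X \times I \to E$, since the subdivision functions $\tau_j^\alpha$ over different cells need not agree at cell boundaries. The natural remedy is to refine the cell decomposition of $X$ to a definable triangulation and then to do skeletal induction on $X$: at stage $n$, the lift is already defined on $(\partial\sigma \times I) \cup (\sigma \times \{0\})$ for each $n$-simplex $\sigma$, and one extends over $\sigma \times I$ using a relative HLP that combines the local fibration property of $p|_V$ or $p|_W$ (applied slab-wise within $\sigma \times I$) with the cofibration character of the box inclusion $(\partial\sigma \times I) \cup (\sigma \times \{0\}) \hookrightarrow \sigma \times I$. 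The technical heart of the argument is thus twofold: choosing the triangulation of $X$ finely enough so that the slab functions are continuous up to the closure of each simplex, and verifying the relative HLP in the definable category.
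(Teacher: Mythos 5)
Your high-level skeleton --- (A) trivial bundles are definable fibrations, plus (B) the fibration property is local over a finite definable open cover of the base --- is exactly the paper's (theorem \ref{fibtheo} together with the one-line observation about trivial bundles), ingredient (A) is correct as you state it, and you have correctly identified the crux: the slab functions produced by cell decomposition are continuous only cell by cell, and the cellwise lifts need not glue. But the remedy you propose does not close this gap. Skeletal induction with uniform slab-wise lifting over each closed simplex is the classical proof that a locally trivial map is a \emph{Serre} fibration, and it relies on subdividing $\sigma\times I$ into finitely many pieces each mapping into a single $U_i$ --- a Lebesgue-number argument. Over a non-archimedean real closed field the ``Lebesgue number'' of the cover $\{f^{-1}(V),f^{-1}(W)\}$ may be infinitesimal while the number of pieces must be finite, so no such uniform subdivision exists; you are forced back to $x$-dependent cut points $\tau^\alpha_j(x)$, and their failure to extend continuously to the closure of a cell (together with the jump in their number across faces) is precisely the obstruction you started with. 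The triangulation relocates the difficulty rather than resolving it. Moreover the theorem demands the homotopy lifting property, with a \emph{definable} lift, for \emph{every} definable $X$; a general $X$ triangulates only as a union of \emph{open} simplices, over which your skeletal induction does not even start, and the classical Hurewicz uniformization theorem cannot be quoted because it produces non-definable lifts.

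What is actually needed, and what the paper supplies, is twofold. First, lemma \ref{slicing_lemma}: slicing functions $0\equiv g_1\le\cdots\le g_{k+1}\equiv 1$ that are continuous on \emph{all} of $X$ (degenerate slabs $g_i=g_{i+1}$ absorb the jumps in the number of slabs), constructed by induction on $\dim X$ using definable choice on the open cells, a definable retraction of a neighbourhood onto the lower-dimensional bad set, a closedness argument to enlarge that neighbourhood, and a reordering trick to merge the resulting families. Second, a gluing mechanism over $X$ itself: the paper works throughout with \emph{lifting functions} (lifts depending definably and continuously on the initial point of the fibre, lemmata \ref{fibifflf} and \ref{lfiffelf}), which can be blended over a finite open cover of $X$ by a definable partition of unity (lemma \ref{elfimplieslf}), after which lemma \ref{piecetogether} performs the slab-by-slab lifting globally. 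Pointwise lifts such as those your sketch produces cannot be averaged this way. Neither ingredient appears in your proposal, and together they constitute essentially the whole content of the proof.
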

Which we need, in particular, for the following.
\bc\label{fibration} Let $H$ be a definable  subgroup of a definable group
$G$. Then $p \colon G \to G/H$  is a definable  fibration
(with the quotient topology on $G/H$).
\ec

The proof of theorem \ref{mainfib} is an adaptation of section 7 in chapter 2 of
\cite{66S}. The main difference lies in that we can not work with the
path-space (which is not even a definable object) and its compact-open topology. 

{\em In the rest of this section, $p:E\to B$ is a
fixed continuous definable map between definable sets
$E$ and $B$}.

Now, we are going to consider a definable subset $X$ of the path-space of
$B$. More precisely, we will consider a {\em family} of paths
parametrized by some definable set $X$, which we will represent using a
continuous definable function from $X\times I$ to $B$. The reader may
think of any $x\in X$ as a path itself, and, by abuse of notation, of
$f(x,-)$ as $x(-)$. From this point of view $f$ is a continuous map from
$X$ to the path-space of $B$ with the compact-open topology.
For each such object, we will call {\em lifting function} any continuous
definable map sending each pair $(e,x)$, where $x$ is an element of $X$
and $e$ is a point in $p^{-1}(x(0))$, to a lifting of $x$ starting at $e$.
It is easily shown (lemma \ref{fibifflf}) that the existence of a lifting
function for any $X$ is equivalent to $p$ being a definable fibration.
Moreover, in lemma \ref{elfimplieslf}, we will show how, given lifting
functions for each element of a finite definable open covering of $X$, we
can get a lifting function for $X$ (basically blending the pieces via a
definable partition of unity).
Finally, through lemmata \ref{slicing_lemma} and \ref{piecetogether}, we
will use the local triviality of $p$ to get such a covering for any subset
$X$ of the pathspace of $B$.

\begin{definition}
For any continuous definable  function $f:X\times I\to B$, let
$\bar{B}_f$ be the definable set
\[
\bar{B}_f = \left\{ (e,x)\in E\times X  \;|\;  p(e)=f(x,0) \right\}.
\]
We will call a continuous definable 
function $\lambda:\bar{B}_f\times I\to E$ a {\em lifting
function for} $f$ if for all $(e,x)\in \bar{B}_f$,
\[
p\circ\lambda\left((e,x),-\right) = f(x,-)
\;\;\;\mbox { 
and
 }\;\;\;
\lambda\left((e,x),0\right) = e.
\]
\end{definition}

\begin{lemma}\label{fibifflf}
The map $p$ is a definable fibration if and only if for any definable set $X$
and for any continuous definable function $f:X\times I\to B$ there is a
lifting function for $f$.
\end{lemma}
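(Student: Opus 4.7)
The plan is to prove the two directions of the equivalence essentially by repackaging the definitions; lifting functions are just the homotopy lifting property parametrised in a universal way.

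For the ``only if'' direction, I would feed the fibration property with the parameter space $\bar{B}_f$ itself. Given a continuous definable $f:X\times I\to B$, set $F:\bar{B}_f\times I\to B$ by $F((e,x),t)=f(x,t)$ and $g:\bar{B}_f\to E$ by $g(e,x)=e$. Both are continuous and definable, and the compatibility $p\circ g=F(-,0)$ is automatic since $p(e)=f(x,0)$ for every $(e,x)\in\bar{B}_f$, by the very definition of $\bar{B}_f$. Assuming $p$ is a definable fibration and applying its homotopy lifting property to $F$ and $g$ produces a continuous definable $h:\bar{B}_f\times I\to E$ with $p\circ h=F$ and $h(-,0)=g$, which is precisely a lifting function for $f$.

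For the ``if'' direction, I would transfer the given lifting function along the initial data. Suppose $X$, $f:X\times I\to B$ and $g:X\to E$ are given with $p\circ g=f(-,0)$. Let $\lambda:\bar{B}_f\times I\to E$ be a lifting function for $f$, which exists by hypothesis. Then $x\mapsto(g(x),x)$ is a continuous definable map from $X$ to $\bar{B}_f$, because $p(g(x))=f(x,0)$ places the pair in $\bar{B}_f$. Defining $h:X\times I\to E$ by $h(x,t)=\lambda((g(x),x),t)$ yields a continuous definable homotopy, and the two defining properties of a lifting function give $h(x,0)=g(x)$ and $p(h(x,t))=f(x,t)$, establishing the homotopy lifting property.

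I do not expect either direction to pose any real obstacle; the content of the lemma is just that ``lifting functions'' repackage the universal instance of the homotopy lifting problem, and both implications are obtained by one substitution each. The genuine work in the section still lies ahead in lemmas \ref{elfimplieslf}, \ref{slicing_lemma} and \ref{piecetogether}, where one must actually construct lifting functions from the local product structure and then patch them via a definable partition of unity.
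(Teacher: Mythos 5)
Your proposal is correct and follows essentially the same argument as the paper: the ``only if'' direction applies the homotopy lifting property to the tautological homotopy on $\bar{B}_f$ with initial data the projection onto the $E$-coordinate, and the ``if'' direction precomposes the lifting function with $x\mapsto(g(x),x)$. Both substitutions match the paper's proof exactly.
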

\begin{proof}
To prove the {\em if} part,
consider functions $f:X\times I \to B$ and $g:X\to E$ such that
$f(-,0)=p\circ g(-)$. Let $\lambda$ be a lifting function for $f$. Then
$h=\lambda((g\times Id)(-),-):X\times I\to E$ is a lifting of $f$ and
$h(-,0)=g(-)$.

To prove the {\em only if} part,
consider the homotopy
$h:\bar{B}_f\times I\to B$ defined by $h((e,x),t)=f(x,t)$. Since $h(-,0) =
p\circ{p_1}$, where  ${p_1}:\bar{B}_f \to E$ is the projection on the first
component, we may lift $h$ to a map $\lambda:\bar{B}_f\times I\to
E$ such that $\lambda(-,0)={p_1}$, which is a lifting function for $f$.
\end{proof}

\begin{definition}
For any continuous definable  function $f:X\times I\to B$ and for any
definable subset $W\subset X$, let
$\widetilde{W}_f$ be the definable set
\[
\widetilde{W}_f = \left\{ (e,x,s)\in E\times W\times I  \;|\; 
	p(e) = f(x,s) \right\}.
\]
We will call a continuous definable  function
$\Lambda:\widetilde{W}_f\times I\to E$ an {\em extended lifting function for
$f$ over} $W$ if for all $(e,x,s)\in \widetilde{W}_f$,
\[
p\circ\Lambda\left((e,x,s),-\right) = f(x,-)
\;\;\;\mbox { 
and
 }\;\;\;
 \Lambda\left((e,x,s),s\right) = e.
\]
\end{definition}

The idea behind an extended lifting function is that it maps any triple
$(e,x,s)$, where $x$ is a path, $s$ is an element of $I$, and $e$ is a
point in $p^{-1}(x(s))$, to a lifting of $x$ passing through $e$ at time
$s$.  As we will see in the next lemma, an extended lifting function is
nothing but a lifting function in disguise (basically because, using
appropriate lifting functions, we can lift separately the two halves of
any path which are before and after $s$); in fact, extended lifting
functions were introduced for merely technical reasons.

\begin{lemma}\label{lfiffelf}
The following statements are equivalent:
\begin{enumerate}
\item[(i)] for any definable set $X$ and for any definable continuous function
$f:X\times I\to B$ there is lifting function for $f$;
\item[(ii)] for any definable set $X$ and for any definable continuous function
$f:X\times I\to B$ there is an extended lifting function for $f$ over $X$.
\end{enumerate} 
\end{lemma}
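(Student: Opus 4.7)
The plan is to prove the two implications separately. The direction (ii) $\Rightarrow$ (i) will be immediate: given an extended lifting function $\Lambda$ for $f$ over $X$, I would set $\lambda((e,x),t) := \Lambda((e,x,0),t)$, which manifestly defines a lifting function for $f$ once one identifies the slice $\{s=0\}$ of $\widetilde{X}_f$ with $\bar{B}_f$ via $(e,x,0)\leftrightarrow(e,x)$.

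For the substantive direction (i) $\Rightarrow$ (ii), the strategy is to mimic the classical Hurewicz-type argument of \cite{66S}: split each path $f(x,-)$ at the prescribed intermediate time $s$ into a backward half (from $f(x,s)$ down to $f(x,0)$) and a forward half (from $f(x,s)$ up to $f(x,1)$), lift the two halves separately by two applications of (i), and splice the lifts at their common starting point $e$. Concretely, I would introduce the two continuous definable families over $X\times I$,
\[
g^-((x,s),u) := f(x,s(1-u)), \qquad g^+((x,s),u) := f(x,s+u(1-s)),
\]
and apply (i) to each to produce lifting functions $\lambda^-$ and $\lambda^+$. For any $(e,x,s)\in\widetilde{X}_f$ the pair $(e,(x,s))$ lies in both $\bar{B}_{g^-}$ and $\bar{B}_{g^+}$ since $p(e)=f(x,s)=g^\pm((x,s),0)$, so the lifts $\mu^\pm(u):=\lambda^\pm((e,(x,s)),u)$ are defined, continuous in all arguments, and satisfy $\mu^\pm(0)=e$.

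The candidate extended lifting function is then built piecewise,
\[
\Lambda((e,x,s),t) := \begin{cases} \mu^-\!\left(\frac{s-t}{s}\right) & \text{if } 0 \leq t \leq s \text{ and } s > 0, \\ \mu^+\!\left(\frac{t-s}{1-s}\right) & \text{if } s \leq t \leq 1 \text{ and } s < 1, \end{cases}
\]
with the single applicable branch used when $s=0$ or $s=1$. The two formulas coincide on the seam $t=s$ (both equal $\mu^\pm(0)=e$), so $\Lambda$ is well defined; the identities $p\circ\Lambda((e,x,s),t)=f(x,t)$ and $\Lambda((e,x,s),s)=e$ are immediate, and continuity of $\Lambda$ on the interior region $\{0<s<1\}$ follows from continuity of the $\lambda^\pm$ together with the matching at the seam.

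The main obstacle I anticipate is continuity along the boundary slices $s\in\{0,1\}$, where one of the reparametrization factors degenerates to $0/0$ and the corresponding base path $g^\pm((x,s),-)$ collapses to a constant. For example, near $(e_0,x_0,1,1)$, approaches with $s_n<1$ and $t_n\geq s_n$ yield limits of the form $\lambda^+((e_0,(x_0,1)),u_*)$ with $u_*\in[0,1]$ arbitrary, so continuity forces $\lambda^+((e_0,(x_0,1)),u)=e_0$ for every $u$; equivalently, $\lambda^+$ must send the degenerate constant path to the constant lift. This regularity on constant paths is not guaranteed by (i) alone, and I would arrange it by the standard reparametrization device of replacing $\lambda^+$ with its composition with a definable ``stopping'' function that halts the lift as soon as the base path becomes constant, treating $\lambda^-$ near $s=0$ symmetrically. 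With these regularized lifting functions the splicing becomes continuous across the boundary and provides the required extended lifting function.
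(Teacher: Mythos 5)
Your overall strategy coincides with the paper's: you dispose of (ii)$\Rightarrow$(i) in the same way, and for (i)$\Rightarrow$(ii) both you and the paper split each path at the time $s$, lift the backward and forward halves by two applications of (i) to auxiliary families over $Y=X\times I$, and splice the two lifts at $e$. The difference is in how the halves are parametrized, and this is where your argument has a genuine gap. You rescale each half to run over all of $[0,1]$, via $g^-((x,s),u)=f(x,s(1-u))$ and $g^+((x,s),u)=f(x,s+u(1-s))$, and then evaluate the lifts at $u=(s-t)/s$ and $u=(t-s)/(1-s)$. You correctly identify that this breaks continuity along the slices $s\in\{0,1\}$, where the base path degenerates to a constant and the evaluation parameter has no limit, so that continuity would force $\lambda^{\pm}$ to be \emph{regular} (to lift constant paths to constant paths). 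But the repair you propose --- composing $\lambda^{+}$ with a ``stopping function'' that halts the lift once the base path becomes constant --- does not work. The stopping time $\tau(x,s)=\inf\{u \;|\; g^+((x,s),-) \text{ is constant on } [u,1]\}$ is not continuous: if $f(x_0,-)$ is nowhere eventually constant, then $\tau(x_0,s)=1$ for all $s<1$ while $\tau(x_0,1)=0$. More generally, no continuous reparametrization $u\mapsto\theta(x,s,u)$ of the lift can work, since $p\circ\lambda^+((e,(x,s)),\theta)=f(x,s+\theta(1-s))$ must agree with $f(x,s+u(1-s))$, which for injective paths forces $\theta(x,s,u)=u$ when $s<1$, incompatible with $\theta(x,1,u)=0$. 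Regularity of lifting functions is a substantially harder theorem (Hurewicz), not a reparametrization device, and nothing in hypothesis (i) provides it.

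The paper sidesteps the issue by not rescaling: it sets $g'((x,s),t)=f(x,s-t)$ for $t\le s$ and $=f(x,0)$ for $t>s$ (and symmetrically $g''((x,s),t)=f(x,s+t)$ for $t\le 1-s$, $=f(x,1)$ for $t>1-s$), i.e.\ each half is traversed at unit speed and then \emph{stopped}, and it splices by $\Lambda((e,x,s),t)=\lambda'((e,(x,s)),s-t)$ for $t\le s$ and $\lambda''((e,(x,s)),t-s)$ for $t>s$. The evaluation parameters $s-t$ and $t-s$ are continuous in $(s,t)$ everywhere and tend to $0$ (rather than to an arbitrary limit) as one approaches the seam $t=s$ or the slices $s=0,1$, so no regularity of $\lambda',\lambda''$ is needed. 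Replacing your rescaled families by these stopped families turns your argument into a complete proof; as written, the boundary continuity step fails.
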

\begin{proof}
The direction $\text{(ii)}\implies\text{(i)}$ is trivial. For
$\text{(i)}\implies\text{(ii)}$,
let $Y=X\times I$ and let
$g',g'':Y\times I\to B$ be defined by
\begin{align*}
g'((x,s),t) &= \begin{cases}
f(x,s-t) &\text{for $t\le s$}\\
f(x,0) &\text{for $t > s$}
\end{cases} \\
g''((x,s),t) &= \begin{cases}
f(x,s+t) &\text{for $t \le 1-s$}\\
f(x,1) &\text{for $t > 1-s$}
\end{cases}
\end{align*}
By (i)
there are lifting functions $\lambda'$ and $\lambda''$ for
$g'$ and $g''$ respectively. Now, an extended lifting function
$\Lambda:\widetilde{X}_f\times I\to E$ for $f$ over $X$ is given by
\[
\Lambda((e,x,s),t) = \begin{cases}
\lambda'((e,(x,s)),s-t) &\text{for $t \le s$} \\
\lambda''((e,(x,s)),t-s) &\text{for $t > s$}
\end{cases}
\]
which is easily proved continuous, moreover for $t\le s$ we have
\[
p\circ\Lambda((e,x,s),t) =
p\circ\lambda'((e,(x,s)),s-t) =
g'((x,s),s-t) =
f(x,t)
\]
and similarly for $s<t$; and finally
\[
\Lambda((e,x,s),s) = \lambda'((e,(x,s)),0) = e.
\]
\end{proof}

\begin{lemma}\label{elfimplieslf}
Let $X$ be a definable set, and let $f:X\times I\to B$ be definable and
continuous. Suppose that there is a finite open covering $\mathcal{W}$ of
$X$ such that for each element $W$ of $\mathcal{W}$ there is an extended
lifting function for $f$. Then there is a lifting function for $f$.
\end{lemma}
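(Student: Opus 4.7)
The plan is to adapt the classical argument of Spanier (\cite[Chapter 2, \S 7]{66S}): I construct the required lifting function $\lambda$ for $f$ by piecing together the given extended lifting functions along time intervals whose lengths depend on a definable partition of unity on $X$ subordinate to $\mathcal{W}$.

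First I would enumerate $\mathcal{W}=\{W_1,\dots,W_k\}$ with corresponding extended lifting functions $\Lambda_1,\dots,\Lambda_k$, and fix a definable continuous partition of unity $u_1,\dots,u_k\colon X\to I$ subordinate to $\mathcal{W}$, i.e.\ $\sum_i u_i=1$ and $\operatorname{supp}(u_i)\subset W_i$. Its existence in an o-minimal expansion of a real closed field is standard (e.g.\ by normalising Euclidean distance functions, after a preliminary shrinking of the cover if needed). Then I set the cumulative times $t_i(x)=\sum_{j\le i}u_j(x)$, giving a continuous definable subdivision $0=t_0(x)\le t_1(x)\le\dots\le t_k(x)=1$ of $I$.

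Next I would define $\lambda\colon \bar{B}_f\times I\to E$ recursively on the pieces $[t_{i-1}(x),t_i(x)]$: starting with $\lambda((e,x),0)=e$, for $t\in[t_{i-1}(x),t_i(x)]$ set
\[
\lambda((e,x),t)\;=\;\Lambda_i\bigl((\lambda((e,x),t_{i-1}(x)),\,x,\,t_{i-1}(x)),\,t\bigr).
\]
Whenever this slice is non-degenerate we have $u_i(x)>0$ and hence $x\in W_i$, so $\Lambda_i$ is applicable; when $u_i(x)=0$ the slice collapses to a single point and the formula formally reduces to $e'\mapsto e'$. The defining identity $\Lambda_i((e',x,s),s)=e'$ guarantees that consecutive pieces agree at the transition times, and the lifting equations $p\circ\Lambda_i((e',x,s),-)=f(x,-)$ propagate to yield $p\circ\lambda((e,x),-)=f(x,-)$ and $\lambda((e,x),0)=e$.

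The main obstacle is the continuity of $\lambda$ at points where $x$ lies on the topological boundary of some $W_i$, since $\Lambda_i$ is not defined outside $W_i$. This is handled by the subordination condition $\operatorname{supp}(u_i)\subset W_i$: on an open neighbourhood of any such $x$ the $i$-th slice is uniformly degenerate, so its contribution to $\lambda$ is trivially continuous. Combining this local degeneracy with the continuity of each $\Lambda_i$ on its own domain and of the functions $t_i$ yields joint continuity of $\lambda$ everywhere, completing the construction of the lifting function for $f$.
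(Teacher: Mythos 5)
Your proof is correct, but it is organised differently from the one in the paper. Both arguments begin with a definable partition of unity $u_1,\dots,u_k$ subordinate to $\mathcal{W}$ (the paper's $\{g_j\}$ with $\closure{W'_j}\subset W_j$); after that they diverge. The paper follows Spanier's scheme: it takes a maximal subset $\alpha$ of the index set over which a lifting function exists and, for a contradiction, enlarges it by one index $j_0$, splicing the old lifting function $\lambda_\alpha$ with a single new extended lifting function at the switch time $2h(x)-1$, where $h=g_\alpha/g_\beta$ is a ratio of partial sums of the partition of unity. You instead build the lifting function in one pass, concatenating all $k$ extended lifting functions along the subdivision $0=t_0(x)\le\dots\le t_k(x)=1$ given by cumulative sums; this is the direct form of the Hurewicz uniformization argument, and for a finite cover it is arguably cleaner, the maximality device being really needed only for infinite numerable covers. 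Both proofs hinge on the same two points, which you identify: the subordination $\operatorname{supp}(u_i)\subset W_i$ guarantees that $\Lambda_i$ is applicable whenever the $i$-th time slice is nondegenerate and that near any $x\notin W_i$ that slice is identically degenerate; and the normalization $\Lambda_i((e',x,s),s)=e'$ makes consecutive pieces match and keeps the formula continuous as a slice collapses. One small point worth making explicit: the delicate case for continuity is not $x\notin W_i$ (where uniform degeneracy on a neighbourhood applies) but $x\in W_i$ with $u_i(x)=0$ and $u_i>0$ arbitrarily close by; there one needs precisely the continuity of $\Lambda_i$ at $((e',x,s),s)$ together with $\Lambda_i((e',x,s),s)=e'$ applied to the recursively defined node values $\lambda((e,x),t_{i-1}(x))$, which your closing sentence covers only implicitly.
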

\begin{proof}
Let $\mathcal{W}=\{W_j\}_{j\in J}$ and
consider a definable
partition of unity $\{g_j\}_{j\in J}$ on $X$
({\em i.e.} $g_j:X\to\struct^{\ge0}$ and $\sum_j g_j\equiv 1$)
such that
$\closure{W'_j}\subset W_j$ for each $j$, where
$W'_j=\{x\in X  \;|\;  g_j(x)>0\}$.
For each subset $\kappa$ of $J$ define
\begin{align*}
	W'_\kappa &= \bigcup_{j\in\kappa} W'_j &
	f_\kappa &= f|_{W'_\kappa\times I} &
	\bar{B}_\kappa &= \bar{B}_{f_\kappa} &
	g_\kappa &= \sum_{j\in\kappa}g_j 
\end{align*}
Consider a maximal subset $\alpha$ of $J$ such that there
is a lifting function $\lambda_\alpha$ for $f_\alpha$.
For the sake of contradiction, suppose there is a  $j_0\in J\setminus\alpha$. We shall construct a lifting
function for $f_\beta$, where $\beta=\alpha\cup \{j_0\}$.
Let $h=\frac{g_\alpha}{g_\beta}:W_\beta\to I$. Clearly $h$
is the constant $1$ on $W'_\alpha\setminus W'_{j_0}$,
the constant $0$ on $W'_{j_0}\setminus
W'_\alpha$, and takes values in $(0,1)$ on $W'_\alpha\cap W'_{j_0}$.
Define
$\mu:\bar{B}_{\{j_0\}}\to E$ by:
\[
\mu(e,x) =
\begin{cases}
e &\text{for $0 \le h(x) < \frac{1}{2}$} \\
\lambda_\alpha((e,x),2h(x)-1) &\text{for $\frac{1}{2} \le h(x) < 1$} \\
\end{cases}
\]
Consider an extended lifting function $\Lambda$ for $f$ over
$W_{j_0}$, then we claim that $\lambda_\beta:\bar{B}_\beta\times I\to E$ defined by:
\begin{multline*}
\lambda_\beta((e,x),t) =\\
=
\begin{cases}
\Lambda((e,x,0),t) &\text{for $0 \le h(x) < \frac{1}{2}$}\\
\lambda_\alpha((e,x),t)
&\text{for $\frac{1}{2} \le h(x) \le 1$ and $0 \le t \le 2h(x)-1$} \\
\Lambda((\mu(e,x),x,2h(x)-1),t) 
&\text{for $\frac{1}{2} \le h(x) \le 1$ and $2h(x)-1 < t \le 1$} \\
\end{cases}
\end{multline*}
is a lifting function for $f_\beta$.
In fact, to prove the continuity of $\lambda_\beta$ suffices to observe that:
\begin{align*}
\Lambda((e,x,0),t) &= \lambda_\alpha((e,x),t)
	&\text{when $h(x)=\frac{1}{2}$ and $t=0$} \\
\Lambda((e,x,0),t) &= \Lambda((\mu(e,x),x,2h(x)-1),t)
	&\text{when $h(x)=\frac{1}{2}$ and $t\ge 0$} \\
\lambda_\alpha((e,x),t) &= \Lambda((\mu(e,x),x,2h(x)-1),t)
	&\text{when $\frac{1}{2}\le h(x)$ and $t=2h(x)-1$}
\end{align*}
for $x\in\closure{W'_{j_0}}\subset W_{j_0}$. Moreover, the equations
$p\circ\lambda_\beta((e,x),t) = f(x,t)$
and $\lambda_\beta((e,x),0) = e$, which are required by the definition of
lifting function, hold because $\Lambda$ and $\lambda_\alpha$ are
(extended) lifting functions.
\end{proof}

\begin{lemma}\label{slicing_lemma}
Let $f$ be a  definable continuous function $f:X\times I\to B$ and let
$\mathcal{U}$
be a finite definable open covering of $B$. Then, there are continuous
definable
functions $0\equiv g_1\le\ldots\le g_{k+1}\equiv 1:X\to I$ such that:\\
($\star$) for each $x\in X$ and each $i\in\{1,\ldots,k\}$, 
the set $f(x,[g_i(x),g_{i+1}(x)])$ is entirely contained in some
element of $\mathcal{U}$.
\end{lemma}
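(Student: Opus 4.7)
I would adapt the classical argument of \cite{66S}, Chapter~2, Section~7, using o-minimal analogues of compactness and paracompactness. Set $V_j := f^{-1}(U_j)$, a finite definable open cover of $X\times I$; the task is equivalent to constructing continuous definable $g_i$ such that each vertical segment $\{x\}\times [g_i(x),g_{i+1}(x)]$ lies in some $V_j$. Fix $x_0\in X$ and at each point $(x_0,t_0)$ use openness of some $V_j$ to extract a product neighborhood $N\times J\subset V_j$. Since the slice $\{x_0\}\times I$ is definably compact, finitely many such product neighborhoods cover it; arranging the intervals $J_l=(a_l,b_l)$ to overlap consecutively and intersecting the $N$-factors yields an open neighborhood $N_{x_0}$ of $x_0$ on which constant cut points $0=s_0<s_1<\cdots<s_{n(x_0)}=1$ (with $s_l\in J_l\cap J_{l+1}$) witness $(\star)$ uniformly in $x\in N_{x_0}$.

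The function $n:X\to\N$ is definable and has finite image by o-minimality (any definable family of continuous functions on $I$ has uniformly bounded combinatorial complexity); let $k$ be its maximum. By inserting repeated cut points we may assume every local partition has length $k+1$. Next, for each ``type'' $\tau=(j_1,\ldots,j_k)\in\{1,\ldots,m\}^k$ let $W_\tau\subset X$ be the open definable set of those $x$ admitting a partition of type $\tau$ witnessing $(\star)$; the finite family $\{W_\tau\}$ covers $X$. On each $W_\tau$ the set of valid partitions at $x$ is a definable open subset of the simplex $\{0\le t_1\le\cdots\le t_{k-1}\le 1\}\subset I^{k-1}$, and one chooses a continuous definable local section $g^\tau:W_\tau\to I^{k+1}$ by selecting the connected component of the valid set indicated by the first step and extracting a continuous canonical point from it. A definable partition of unity $\{\phi_\tau\}$ subordinate to $\{W_\tau\}$ then produces global continuous $g_i:=\sum_\tau \phi_\tau g_i^\tau$; the endpoint conditions $g_1\equiv 0$, $g_{k+1}\equiv 1$ and the monotonicity $g_i\le g_{i+1}$ pass through convex combinations.

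The principal obstacle I expect is verifying $(\star)$ for the glued $g_i$, since a convex combination of valid partitions of different types need not itself be valid. I would overcome this by strengthening the first step so that, together with each local constant partition, every slice-interval $\{x\}\times[s_{l-1},s_l]$ sits in an open thickening still contained in $V_{j_l}$ uniformly over $N_{x_0}$, and by choosing the partition of unity so that at every $x$ only types with mutually compatible $j$-sequences contribute; this can be arranged by subdividing the cover $\{W_\tau\}$ according to the definable ``local type'' selected by the construction. With these refinements, every combined interval $[g_i(x),g_{i+1}(x)]$ still sits inside the appropriate $V_{j_l}$, so $(\star)$ holds globally.
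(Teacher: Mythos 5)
Your overall shape (localize, uniform finiteness, a finite open cover of $X$ by ``types'' $W_\tau$, then glue) is close in spirit to the paper's, and some pieces are sound: the sets $W_\tau$ are indeed definable, open (by closedness of the projection $X\times I\to X$, since $I$ is closed and bounded) and cover $X$ by uniform finiteness. But two essential steps are gaps. First, the continuous local selection on each $W_\tau$: you propose to ``select the connected component of the valid set... and extract a continuous canonical point from it.'' In an o-minimal structure definable choice gives a definable section, but not a continuous one, and there is no canonical continuous point of a definably varying open subset of a simplex --- as $x$ moves, components of the valid set split, merge, appear and disappear, so any such selection is forced to jump. This is precisely the difficulty the paper's proof is organized around: it applies definable choice to get discontinuous cut functions $l_i$, uses cell decomposition to find a large open set where they are continuous, and handles the lower-dimensional remainder by induction on $\dim X$ (together with a definable retraction onto it and an open thickening $Z'$). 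Your proposal has no mechanism playing this role, so the existence of the continuous local sections $g^\tau$ is unproven.

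Second, the gluing. You correctly identify that a convex combination $\sum_\tau\phi_\tau g_i^\tau$ of valid partitions of different types need not be valid, but the proposed repair (thickenings plus a partition of unity in which ``only mutually compatible types contribute'') does not work as stated: on an overlap $W_\tau\cap W_{\tau'}$ the two partitions may place their cut points at completely unrelated positions, so the convex combination is not a small perturbation of either, and no pointwise thickening of the individual slice-intervals controls it; nor can one in general arrange that incompatible types never overlap in the support of a partition of unity. The missing idea is the paper's ``caveat'': property $(\star)$ is stable under passing to an ordered \emph{superset} of the cut functions, because refining a valid partition only shrinks the intervals. Hence one does not combine the local families by averaging at all: one extends each local cut function continuously to all of $X$ (using a shrinking $\closure{V'_i}\subset V_i$ and the definable contractibility of $I$), throws all of them together, and takes $g_r(x)$ to be the $r$-th element in ascending order; on each $\closure{V'_i}$ a sub-family already satisfies $(\star)$, so the sorted full family does too. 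Without this observation (or an equivalent), your gluing step fails.
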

\begin{proof}
{\em Caveat:} Observe that if ($\star$) holds for an {\em ordered} set of
definable functions
$\mathcal{G}$ then it also holds for any superset of $\mathcal{G}$.\\
The proof is by induction on the dimension of $X$.
If $\operatorname{dim}(X)=0$ then $X$ is finite and the
statement is trivial.

Suppose the statement is true for
$\operatorname{dim}(X)\le n$, then we will prove it
for $\operatorname{dim}(X)=n+1$.
Observe that for each $x\in X$ there are $0=l_1<\ldots<l_{h(x)+1}=1$ such that
for each $i\in\{1,\ldots,h(x)\}$ the set $f(x,[l_i,l_{i+1}])$ is entirely contained in some
element of $\mathcal{U}$.
Without loss of generality we may suppose $h(x)$ to be minimal for each $x$,
then $h=\operatorname{sup}_{x\in X} h(x) < \infty$ by
uniform finiteness.
By definable choice (and by the caveat) we have definable functions
$0\equiv l_1\le\ldots\le l_{h+1}\equiv 1: X\to I$,
not necessarily continuous, such that ($\star$) holds for them.
Let $\mathcal{X}$
be a finite partition of $X$ into definable sets
such that each of the functions $l_i$ is continuous on each of the
sets of $\mathcal{X}$. Now consider the set $\mathcal{X}'$ of the interiors
of the sets in $\mathcal{X}$, and let $Y=X\setminus\bigcup\mathcal{X}'$:
clearly $\operatorname{dim}(Y)\le n$. By the induction hypothesis applied
to $f_{|Y\times I}$ and $\mathcal{U}$, we get definable continuous functions
$0\equiv g_1\le\ldots\le g_{k+1}\equiv 1:Y\to I$ such that 
for each $x\in Y$ and each $i\in\{1,\ldots,k\}$
the set $f(x,[g_i(x),g_{i+1}(x)])$ is entirely contained in some
element of $\mathcal{U}$.
Consider a definable open neighbourhood $Z$ of $Y$ in $X$ such that there is a
retraction $r:Z\to Y$.
We claim that, by continuity of the functions $g_i\circ r$, we can
find in $Z$ a new definable open neighbourhood $Z'$ of $Y$ such that
for each $x\in Z'$ and each $i\in\{1,\ldots,k\}$
the set $f(x,[g_i\circ r(x),g_{i+1}\circ r(x)])$ is entirely contained in
some element of $\mathcal{U}$.
In fact, let $\mathcal{U}=\{U_j\}_j$: working in $Z$, we observe that,
since $I$ is closed and bounded, the projection $p_1:Z\times I\to Z$
on the first component is a closed function,
hence, for each $i$ and $j$, the set
\begin{align*}
Z_{i,j} &=
\{x\in Z \;|\; f(x,[g_i\circ r(x),g_{i+1}\circ r(x)])\subset
U_j\}\\
&= Z\setminus p_1\left(\left\{\left(x,t\right)\in Z\times I  \;|\;  t\in
\left[g_i\circ r(x),g_{i+1}\circ r(x)\right]\right\}
\setminus f^{-1}(U_j)\right)
\end{align*}
is open, and so is $Z'=\bigcup_{i,j}Z_{i,j}$, which contains $Y$ by
hypothesis.

Finally, we have a definable open covering
$\mathcal{X}''=\mathcal{X}'\cup\{Z'\}$ of $X$ such
that the statement of the lemma holds for each element of $\mathcal{X}''$:
we will show that this implies the statement for $X$.
For, let $\mathcal{X}''=\{V_1,\ldots,V_m\}$ and suppose that for each $i$
the statement holds for $V_i$ and definable functions $g_{i,j}:V_i\to I$.
Consider a shrinking $\{V'_1,\ldots,V'_m\}$ of $\mathcal{X}''$ such that
$\closure{V'_i}\subset V_i$ for each $i$.
Since $I$ is definably contractible, for each $i$ and $j$ we have
a continuous definable extension $g'_{i,j}:X\to I$ of
${g_{i,j}}_{|\closure{V'_i}}$. Hence, the continuous definable functions
\[
	g_r(x) = \text{ the $r$-th element of $\{g'_{i,j}(x)\}_{i,j}$ in
	ascending order }
\]
satisfy ($\star$), by the caveat, since a subset of them does on
each of the sets $\closure{V'_i}$, which cover $X$.
\end{proof}

\begin{lemma}\label{piecetogether}
Let $U_1,\ldots,U_k$ be definable subsets of $B$.
Suppose that, for any definable set $X$, for any definable continuous
function $f:X\times I\to B$, and for any $i$, there is an extended lifting
function for $f$ over $\{x\in X \;|\;  f(x,I)\subset U_i\}$.
Then, for any definable set $X$, for any definable continuous function
$f:X\times I\to B$, and for any definable continuous functions $0\equiv
g_1\le\ldots\le g_{k+1}\equiv 1:X\to I$, there is an extended lifting
function for $f$ over $W$, where
\[
W = \{ x\in X  \;|\;   f(x,[g_i(x),g_{i+1}(x)])
\subset U_i \mbox{ for all } i=1,\dots,k \}.
\]
\end{lemma}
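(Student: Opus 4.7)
The plan is to argue by induction on $k$. For the base case $k=1$, the constraints $g_1\equiv 0$ and $g_2\equiv 1$ force $W = \{x\in X : f(x,I)\subset U_1\}$, and the hypothesis of the lemma applied with $i=1$ directly supplies the required extended lifting function.

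For the inductive step, I will split the interval $I$ at time $g_k(x)$ and lift the two halves separately, then concatenate. The key device is a pair of \emph{capped} reparametrizations that avoid dividing by $g_k(x)$ (or by $1-g_k(x)$) and hence work uniformly even on the locus where $g_k$ attains $0$ or $1$. Define continuous definable maps $W\times I\to B$ by $\check{f}(x,t)=f(x,\min\{t,g_k(x)\})$ and $\hat{f}(x,t)=f(x,\max\{t,g_k(x)\})$: so $\check{f}(x,\cdot)$ agrees with $f(x,\cdot)$ on $[0,g_k(x)]$ and is constantly $f(x,g_k(x))$ on $[g_k(x),1]$, while $\hat{f}(x,\cdot)$ is constantly $f(x,g_k(x))$ on $[0,g_k(x)]$ and agrees with $f(x,\cdot)$ on $[g_k(x),1]$. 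Since $\hat{f}(x,I)=f(x,[g_k(x),1])\subset U_k$ for every $x\in W$, the hypothesis of the lemma with $i=k$ yields an extended lifting function $\hat{\Lambda}$ for $\hat{f}$ over $W$. For $\check{f}$, the markers $g_1,\ldots,g_{k-1}$ together with the constant function $1$ slice $I$ into $k-1$ pieces on which $\check{f}(x,\cdot)$ takes values in $U_1,\ldots,U_{k-1}$ respectively---the last piece works because $\check{f}(x,[g_{k-1}(x),1])=f(x,[g_{k-1}(x),g_k(x)])\subset U_{k-1}$---so the inductive hypothesis yields an extended lifting function $\check{\Lambda}$ for $\check{f}$ over $W$.

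To assemble $\Lambda$ on $\widetilde{W}_f\times I$, I split into four cases according to whether $s$ and $t$ lie above or below $g_k(x)$. When $s,t\le g_k(x)$, set $\Lambda(e,x,s,t)=\check{\Lambda}(e,x,s,t)$; when $s,t\ge g_k(x)$, set $\Lambda(e,x,s,t)=\hat{\Lambda}(e,x,s,t)$; when $s\le g_k(x)\le t$, lift first to time $g_k(x)$ using $\check{\Lambda}$ and then continue to $t$ using $\hat{\Lambda}$, namely $\Lambda(e,x,s,t)=\hat{\Lambda}(\check{\Lambda}(e,x,s,g_k(x)),x,g_k(x),t)$; and symmetrically when $t\le g_k(x)\le s$. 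The four formulas agree on their common boundaries---the identities $\check{\Lambda}(e,x,g_k(x),g_k(x))=\hat{\Lambda}(e,x,g_k(x),g_k(x))=e$ collapse the composite cases to the non-composite ones at $s=g_k(x)$ or $t=g_k(x)$---so $\Lambda$ is continuous. The required identities $\Lambda((e,x,s),s)=e$ and $p\circ\Lambda((e,x,s),-)=f(x,-)$ follow from the analogous identities for $\check{\Lambda}$ and $\hat{\Lambda}$ together with the fact that $\check{f}$ and $\hat{f}$ coincide with $f$ on the subintervals where they are actually invoked.

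The main obstacle, and the reason for introducing the capping, is that the naive linear rescaling $t\mapsto g_k(x)\,t$ of the first half would produce rescaled markers $g_i(x)/g_k(x)$ that are undefined precisely where $g_k(x)=0$; one would then be forced to split $W$ into the loci $\{g_k=0\}$ and $\{g_k>0\}$ and glue two separate constructions continuously. The capped choice of $\check{f}$ and $\hat{f}$ makes both reparametrizations globally continuous and reduces the remaining verification to a single continuity check at the junction $s=g_k(x)$ or $t=g_k(x)$.
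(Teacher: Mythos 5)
Your argument is correct. It rests on the same key device as the paper's own proof---cap the path so that its restriction to a subinterval becomes a full path lying inside a single $U_i$, apply the hypothesis to the capped family, and chain the resulting lifts at the marker times---but the bookkeeping is organized quite differently. The paper treats all $k$ subintervals simultaneously: it introduces one auxiliary family $h((x,a,b),t)$ on $Y=\{(x,a,b)\;|\;a<b\}$ (constant before $a$ and after $b$), extracts extended lifting functions $\Lambda_i$ for $h$ over $\{y\;|\;h(y,I)\subset U_i\}$, and then builds a doubly indexed family $\Lambda'_{i,j}$ on the closed regions $s\in[g_i(x),g_{i+1}(x)]$, $t\in[g_j(x),g_{j+1}(x)]$ by recursion on $|i-j|$, gluing a $k\times k$ grid at the end. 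Your induction on $k$ with a single split at $g_k$ collapses that grid to a $2\times 2$ case analysis, at the price of one application of the inductive hypothesis to the $\min$-capped family; the continuity checks become correspondingly shorter. A further small gain is that your $\min/\max$ capping is insensitive to degenerate intervals ($g_i(x)=g_{i+1}(x)$, in particular $g_k(x)\in\{0,1\}$), whereas the paper must keep $a<b$ in its parameter space $Y$ and use the selector $l(x,s)$ to skip collapsed intervals. One point to spell out when writing this up: the two extended lifting functions you obtain are lifting functions for the capped families $f^{\min}(x,t)=f(x,\min\{t,g_k(x)\})$ and $f^{\max}(x,t)=f(x,\max\{t,g_k(x)\})$, not for $f$, so their domains $\widetilde{W}_{f^{\min}}$ and $\widetilde{W}_{f^{\max}}$ differ from $\widetilde{W}_f$; your four formulas only ever feed them triples $(e,x,s)$ with $s$ on the appropriate side of $g_k(x)$ (or with $s=g_k(x)$ exactly), where the capped family agrees with $f$, so all the compositions are legitimate, and $p\circ\Lambda((e,x,s),t)$ equals the capped value at $t$, which equals $f(x,t)$ precisely because $t$ also lies on the matching side. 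With that made explicit, the verification is complete.
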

\begin{proof}
The idea behind the following convoluted formul\ae\ is that, for any $i$, using
the extended lifting function for $U_i$ (which we have by hypothesis), we can
lift any path $x\in W$ restricted to the interval $[g_i(x),g_{i+1}(x)]$.
Hence, all we have to do is to lift such paths interval by interval, taking
care that they fit together properly.

Let $Y=\{(x,a,b)\in X\times I^2 \;|\; a<b\}$, and let $h:Y\times I\to B$ be defined by
\[
h((x,a,b),t) =
\begin{cases}
f(x,a) &\text{for $t \in [0,a)$} \\
f(x,t) &\text{for $t \in [a,b]$} \\
f(x,b) &\text{for $t \in (b,1]$}
\end{cases}
\]
By hypothesis, for each $i$, we have an extended lifting function
$\Lambda_i$ for $h$ over $Y_i=\{y\in Y \;|\;  h(y,I)\subset U_i\}$.
Let
\[
L_{i,j}=\{((e,x,s),t)\in\widetilde{W}_f\times I  \;|\; 
	s\in[g_i(x),g_{i+1}(x)] \text{ and }
	t\in[g_j(x),g_{j+1}(x)]\}
\]
and define $\Lambda'_{i,j}: L_{i,j}\to E$ by
\begin{multline*}
\Lambda'_{i,j}((e,x,s),t) =\\
=
\begin{cases}
	\Lambda_i((e,(x,g_i(x),g_{i+1}(x)),s),t)
	&\text{for $i=j$} \\
	\Lambda_j((\Lambda'_{i,j-1}((e,x,s),g_j(x)),(x,g_j(x),g_{j+1}(x)),g_j(x)),t)
	&\text{for $j>i$} \\
	\Lambda_j((\Lambda'_{i,j+1}((e,x,s),g_{j+1}(x)),(x,g_j(x),g_{j+1}(x)),g_{j+1}(x)),t)
	&\text{for $j<i$} 
\end{cases}
\end{multline*}
It is routine to prove simultaneously by induction on $|j-i|$ that these are
well-defined continuous functions and
\[
\tag{$\star$}
\label{star}
\forall ((e,x,s),t)\in L_{i,j} \quad f(x,t) = p\circ\Lambda'_{i,j}((e,x,s),t)
\]
Now, let
\begin{gather*}
l:X\times I \to \{1,\ldots,k\} \\
l(x,s) = i<k \text{ iff $g_i(x)<g_{i+1}(x)$ and $s \in [g_i(x),g_{i+1}(x))$} \\
l(x,s) = k \text{ iff $s \in [g_k(x),g_{k+1}(x)]$}
\end{gather*}
and define
\begin{gather*}
\Lambda':\widetilde{W}_f\times I \to E \\
\Lambda'((e,x,s),t) = \Lambda'_{l(x,s),l(x,t)}((e,x,s),t)
\end{gather*}
We have the continuity of $\Lambda'$
since
the functions $\Lambda'_{i,j}$ coincide on the intersections of their
domains, which are closed. That, in turn, may be proved easily
observing that
\[
\Lambda'_{i,i}((e,x,s),s) = e
\]
by definition,
\[
\Lambda'_{i,j}((e,x,g_i(x)),t) = \Lambda'_{i-1,j}((e,x,g_i(x)),t)
\]
by induction on $|j-i|$, and
\[
\Lambda'_{i,j}((e,x,s),g_j(x)) = \Lambda'_{i,j-1}((e,x,s),g_j(x))
\]
by definition.
Since $\Lambda'((e,x,s),s) = e$,  by (\ref{star}) we can conclude   that $\Lambda'$ is an extended lifting function for $f$ over $W$.
\end{proof}

\begin{theorem}\label{fibtheo}
Given a  definable map $p:E\to B$, if there is a finite definable open covering
$\mathcal{U}$
of $B$,
such that for each $U\in\mathcal{U}$
the map $p|_{p^{-1}(U)}:p^{-1}(U)\to U$ is
a definable fibration, then $p$ is a definable fibration.
\end{theorem}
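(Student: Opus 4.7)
The plan is to combine the five preceding lemmas in sequence. By Lemma~\ref{fibifflf} together with Lemma~\ref{lfiffelf}, to show that $p$ is a definable fibration it suffices to produce, for every definable set $X$ and every continuous definable $f \colon X \times I \to B$, an extended lifting function for $f$ over $X$. Then by Lemma~\ref{elfimplieslf}, it is enough to exhibit a finite definable open covering $\mathcal{W}$ of $X$ such that each $W \in \mathcal{W}$ admits an extended lifting function for $f$.

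To build such a covering, I would apply Lemma~\ref{slicing_lemma} to $f$ and the given open covering $\mathcal{U} = \{U_1, \ldots, U_n\}$ of $B$, obtaining continuous definable functions $0 \equiv g_1 \le \cdots \le g_{k+1} \equiv 1 \colon X \to I$ such that each set $f(x, [g_i(x), g_{i+1}(x)])$ lies in some element of $\mathcal{U}$. For each function $\sigma \colon \{1, \ldots, k\} \to \mathcal{U}$, set
\[
W_\sigma = \{x \in X \;|\; f(x, [g_i(x), g_{i+1}(x)]) \subset \sigma(i) \text{ for all } i = 1, \ldots, k\}.
\]
There are only finitely many such $\sigma$, and by the slicing lemma the $W_\sigma$ cover $X$. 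Each $W_\sigma$ is open: as in the proof of Lemma~\ref{slicing_lemma}, the projection $p_1 \colon X \times I \to X$ is closed (since $I$ is closed and bounded), so each condition $f(x, [g_i(x), g_{i+1}(x)]) \subset \sigma(i)$ defines an open subset of $X$, and $W_\sigma$ is a finite intersection of such sets.

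It remains to produce an extended lifting function for $f$ over each $W_\sigma$, which I would do by applying Lemma~\ref{piecetogether} with $(U_1, \ldots, U_k) := (\sigma(1), \ldots, \sigma(k))$ and the functions $g_1, \ldots, g_{k+1}$. The hypothesis of Lemma~\ref{piecetogether} requires an extended lifting function for $f$ over $\{x \in X \;|\; f(x, I) \subset U_i\}$ for each $i$. This is the one place where we use the assumption of the theorem: for such $x$, the map $f$ restricts to a continuous definable map into $U_i$, and since $p|_{p^{-1}(U_i)} \colon p^{-1}(U_i) \to U_i$ is a definable fibration by hypothesis, Lemmas~\ref{fibifflf} and \ref{lfiffelf} applied inside $U_i$ yield the needed extended lifting function (whose values in $p^{-1}(U_i) \subset E$ are automatically extended lifting function values for $f$ into $E$). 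The main conceptual obstacle, already handled by the preceding machinery, is the need to glue local liftings across the time variable (done in Lemma~\ref{piecetogether}) and across the parameter space (done in Lemma~\ref{elfimplieslf}); here we simply orchestrate these two gluings, using the slicing lemma to align them.
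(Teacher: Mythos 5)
Your proposal is correct and follows essentially the same route as the paper's proof: reduce via Lemmas~\ref{fibifflf} and \ref{elfimplieslf} to finding a finite open cover of $X$ with extended lifting functions, use Lemma~\ref{slicing_lemma} to produce the $g_i$ and the cover $\{W_\sigma\}_\sigma$, and verify the hypothesis of Lemma~\ref{piecetogether} from the local fibration assumption via Lemmas~\ref{fibifflf} and \ref{lfiffelf}. Your explicit check that the $W_\sigma$ are open (via closedness of the projection $X\times I\to X$) is a detail the paper leaves implicit.
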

\bp We  apply lemma \ref{fibifflf}. Fix a definable set $X$ and a continuous definable function $f:X\times I\to B$, we have to show that there is a lifting function for $f$ over $X$.  By lemma \ref{elfimplieslf}, it suffices to find a  definable finite open covering $\mathcal{W}$ of $X$  and  for each
$W\in\mathcal{W}$ an extended lifting function for $f$ over  $W$.

 On the other hand, by lemma \ref{slicing_lemma}
there are continuous functions $0\equiv g_1\le\ldots\le g_{k+1}\equiv 1:X\to I$ such
that for each $x\in X$ and each $1\leq i\leq k$ the set $f(x,[g_i(x),g_{i+1}(x)])$ is entirely contained in some element of $\mathcal{U}$.
Let $\mathcal{U}=\{U_j\}_{j=1}^h$, and for each map $\sigma:\{1,\ldots,k\}\to\{1,\ldots,h\}$ let
\[
W_\sigma = \{ x\in X  \;|\;  f(x,[g_i(x),g_{i+1}(x)])
\subset U_{\sigma(i)}, \mbox{ for all }i =1,\dots,k \}.
\]
The sets $\{W_\sigma\}_\sigma$ are a definable finite open covering of $X$. Therefore, it suffices to prove that for each $\sigma$ there is an extended lifting function for $f$ over $W_\sigma$.  Fix such a $\sigma$ and  denote $U_{\sigma(i)}$ by $U_i^\sigma$, for each $i=1,\dots, k$.  By hypothesis $p_{|p^{-1}(U^\sigma_i)}:p^{-1}(U^\sigma_i)\to U^\sigma_i  $ is a definable fibration, for each $i=1,\dots, k$. Hence by lemmata \ref{fibifflf} and \ref{lfiffelf}, for each $i$, each definable $Y$ and each definable continuous $h: Y\times I \to  U^\sigma_i  $ there is an extended lifting function for $h$ over $Y$. Therefore,  for every $i$ for every  definable $Z$ and every  definable continuous $g: Z\times I \to  B $ there is an extended lifting function for $g$ over $\{z\in Z\;|\; g(z,I)\subset U^\sigma_i \}$. That is, we are under the hypothesis of lemma \ref{piecetogether}, and consequently there is an extended lifting
function for $f$ over $W_\sigma$, as required.
\ep

\begin{proof}[Proof of theorem \ref{mainfib}]
By theorem \ref{fibtheo}, observing that trivial bundles are fibrations.
\end{proof}

\section{Homotopy of definable groups}
\begin{definition}
A {\em definable $H$-space} is a definable pointed space $(X,x_0)$ equipped
with a definable continuous map $\mu\,:\,X\times X\to X$ such that both $\mu(-,x_0)$
and $\mu(x_0,-)$ are definably homotopic to the identity.
\end{definition}

It is clear that a definable $H$-space definable over $\R$ is, in particular, an
$H$-space (in the classical sense).
\bt\label{hg} Let G be a definable group. Then  $\pi_n(G)$ is finitely generated for each $n>0$.\et
\bp
We can assume, without loss of generality, that $G$ is definably connected
and bounded.  By the triangulation theorem we can identify
$G$ with the realization $|K'|(M)$ in $M$
of a  finite simplicial complex $K'$ with rational
vertexes, one of which is the group identity $e$.  Now,  let $K$ be a
closed subcomplex of $K'$ such that $K(M)$ is a semialgebraic deformation
retract of $|K'|(M)$ containing $e$. The multiplication on $|K'|(M)$
induces a map $\nu :|K|(M)\times|K|(M)\to |K|(M)$ which gives to
$(|K|(M),e)$ the structure of a  definable $H$-space. Moreover, by
\cite[corollary 3.6]{08pBaO}, the map $\nu$ is definably homotopic 
to a semialgebraic continuous map $$\mu\colon  |K|(M)\times |K|(M)\to |K|(M)$$ definable without
parameters. Of course, $\mu$ induces again a definable $H$-space
structure over $(|K|(M),e)$. Moreover, again by \cite{08pBaO}, the semialgebraic maps  defined without parameters $\mu(-,e)$ and $id_{|K|(M)}$, which are {\em definably} homotopic, are also semialgebraically homotopic with an homotopy defined without parameters; similarly for $\mu(e,-)$ and $id_{|K|(M)}$.
\begin{claim}
Let $|K|(\R)$ denote the realization of $K$ in $\R$, then
$\pi_n(|K|(\R))$ is finitely generated for each $n\ge 1$.
\end{claim}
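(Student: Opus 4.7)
My plan is to transfer the definable $H$-space structure from $|K|(M)$ to a classical topological $H$-space structure on $|K|(\mathbb{R})$, and then apply the standard result that a connected $H$-space with finitely generated integral homology has finitely generated higher homotopy groups.

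The first step is the transfer. The semialgebraic map $\mu\colon |K|(M)\times |K|(M)\to |K|(M)$ is defined without parameters, so evaluating the same defining formulae over $\mathbb{R}$ yields a continuous map $\mu_{\mathbb{R}}\colon |K|(\mathbb{R})\times |K|(\mathbb{R})\to |K|(\mathbb{R})$. By the same token, the semialgebraic homotopies without parameters between $\mu(-,e)$, $\mu(e,-)$ and the identity of $|K|(M)$ (whose existence was just established using \cite{08pBaO}) produce continuous homotopies between $\mu_{\mathbb{R}}(-,e)$, $\mu_{\mathbb{R}}(e,-)$ and the identity of $|K|(\mathbb{R})$. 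Thus $(|K|(\mathbb{R}),e)$ is a classical $H$-space.

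Next, I would verify the hypotheses needed on $|K|(\mathbb{R})$. Since $K$ is a finite simplicial complex, $|K|(\mathbb{R})$ is a finite CW complex, so every singular homology group $\H{i}{|K|(\mathbb{R})}{\mathbb{Z}}$ is finitely generated. For path-connectedness, recall that $G$ (hence $|K'|(M)$, and hence the retract $|K|(M)$) is definably connected, and the number of semialgebraic connected components of a semialgebraic set is invariant under passage between real closed fields, so $|K|(\mathbb{R})$ is connected.

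Finally, I would invoke the classical theorem that a path-connected $H$-space with finitely generated integral homology in every degree has finitely generated homotopy groups in every degree. The standard route is Serre's mod-$\mathcal{C}$ theory with $\mathcal{C}$ the Serre class of finitely generated abelian groups: any connected $H$-space is simple (in particular nilpotent) because $\pi_1$ is abelian and acts trivially on all $\pi_n$, so mod-$\mathcal{C}$ Hurewicz propagates finite generation from $H_*$ to $\pi_*$ inductively. The only point requiring attention is that the paper's definition of $H$-space assumes only two-sided homotopy units (no associativity), but this is precisely the hypothesis under which the classical theorem is stated; so there is no real obstacle here beyond citing the right form of the classical result.
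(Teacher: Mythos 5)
Your proposal follows essentially the same route as the paper: transfer the parameter-free semialgebraic $H$-space structure to $|K|(\R)$, note that the finite complex has finitely generated homology, and invoke the classical fact that a path-connected $H$-space is simple so that finite generation of homology propagates to the homotopy groups. The extra care you take over connectedness and the mod-$\mathcal{C}$ justification only elaborates on the same citation the paper makes to Whitehead's theorem for simple spaces.
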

\bp
$(|K|(\R),e)$ is endowed with a definable $H$-space structure by $\mu$, or
more precisely by the function defined by the same formula as
$\mu$ interpreted in $\R$.
Therefore $(|K|(\R),e)$ is an $H$-space.
Notice that every path-connected $H$-space is {\em simple} ({\em i.e.} its fundamental group acts trivially on
all homotopy groups, see \cite[chapter 7 theorem 3.9]{66S}), and
for simple spaces the homotopy groups are all finitely generated if  the
homology groups are so (\cite[chapter XIII corollary 7.14]{78W}),
hence we have the claim since $|K|(\R)$ is a finite simplicial
complex. 
\ep
Now, by \cite[corollary 4.4]{08pBaO}, $\topi{n}{|K|(M)}$ is isomorphic to
$\topi{n}{|K|(\R)}$, hence it is a finitely generated abelian group.  Since $\topi{n}{|K|(M)}\cong\topi{n}{G}$, we get the result.
\ep

\bc\label{hgab} Let G be a definable abelian group. Then  $\pi_n(G)=0$, for each $n\geq2$.\ec
\bp  By theorem \ref{hg}, $\pi_n(G)$ is a finitely generated group; moreover,
we know that it is abelian for each $n\geq2$.
Since a finitely generated abelian group is divisible if and only if it is
trivial, it suffices to show that $\topi{n}{G}$ is
divisible for each $n>1$.  We may assume $G$ is definably connected. By
\cite[corollary 2.12]{04EO} the maps
\[
p_k\,:\,G\ni x\to kx\in G
\]
with $k\in\NP$ are definable covering maps,
and hence, by
\cite[corollary 4.11]{08pBaO} they induce isomorphisms on the higher homotopy groups.
As a consequence, for each $[\alpha]\in\topi{n}{G}$ and for each
$k\in\NP$ we have some
$[\beta]\in\topi{n}{G}$ such that $[p_k\circ\beta]=[\alpha]$, and
we can conclude that $\topi{n}{G}$ is divisible observing that
$[p_k\circ\beta]=k[\beta]$.
\ep

Let us denote by $\mathbb{T}^d(M)$ the $d$-dimensional torus defined as
the subset $[0,1)^d$ of $M^d$ with the sum operation modulo $1$.

\begin{theorem}\label{th2}
Let $G$ be a definably connected definably compact  $d$-dimensional abelian group.
Then $G$
is definably homotopy equivalent to $\mathbb{T}^d(M)=[0,1)^d$, {\em i.e.}, there are definable continuous maps $g\colon G\to \mathbb{T}^d(M)$ and $f\colon \mathbb{T}^d(M)\to G$, such that $f\circ g$  and $g\circ f$ are definably homotopic to the identity.
\end{theorem}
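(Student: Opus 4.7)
The plan is to construct a definable continuous map $f\colon \mathbb{T}^d(M)\to G$ which induces isomorphisms on all o-minimal higher homotopy groups, and then to invoke the o-minimal Whitehead theorem of \cite{08pBaO} to produce a definable homotopy inverse $g\colon G\to \mathbb{T}^d(M)$.

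First I would observe that $\mathbb{T}^d(M)=[0,1)^d$ is itself a definably connected, definably compact abelian group of dimension $d$. Hence by Corollary \ref{hgab} both $G$ and $\mathbb{T}^d(M)$ satisfy $\pi_n=0$ for every $n\ge 2$, and by \cite{04EO} they both satisfy $\pi_1\cong \mathbb{Z}^d$; so the two spaces already share the same o-minimal higher homotopy invariants.

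To build $f$, I would pick definable loops $\gamma_1,\ldots,\gamma_d\colon I\to G$ based at $e$ (so $\gamma_i(0)=\gamma_i(1)=e$) whose homotopy classes generate $\pi_1(G)$, and set
\[
f(t_1,\ldots,t_d)=\gamma_1(t_1)\cdot \gamma_2(t_2)\cdots \gamma_d(t_d),
\]
with the product taken in $G$. Because $G$ is abelian and each $\gamma_i$ is a loop at $e$, this formula descends from $I^d$ to $\mathbb{T}^d(M)$ and defines a definable continuous basepoint-preserving map. Restricting $f$ to the $i$-th coordinate axis of $\mathbb{T}^d(M)$ recovers $\gamma_i$, so $f_*$ carries the canonical $\mathbb{Z}$-basis of $\pi_1(\mathbb{T}^d(M))\cong \mathbb{Z}^d$ to a generating set of $\pi_1(G)\cong \mathbb{Z}^d$. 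Since any surjection between free abelian groups of the same finite rank is an isomorphism, $f_*$ is an isomorphism on $\pi_1$; on $\pi_n$ for $n\ge 2$ both sides vanish and $f_*$ is again an isomorphism.

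Finally I would appeal to the o-minimal Whitehead theorem of \cite{08pBaO}: since $f$ is a definable weak equivalence between definable spaces to which Whitehead applies --- both $G$ and $\mathbb{T}^d(M)$ being definably homeomorphic to realisations of finite simplicial complexes by the triangulation theorem --- it admits a definable homotopy inverse $g\colon G\to \mathbb{T}^d(M)$, which is the map required by the statement. I expect the main obstacle to be the clean invocation of Whitehead's theorem in the o-minimal setting: one must verify that the definable spaces $G$ and $\mathbb{T}^d(M)$ (and the triangulations used to present them) fulfil the precise hypotheses stated in \cite{08pBaO}, but this is exactly the kind of step that triangulation is designed to handle.
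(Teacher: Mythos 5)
Your proposal is correct and is essentially the paper's own argument: the paper likewise takes $f(t_1,\ldots,t_d)=\gamma_1(t_1)+\cdots+\gamma_d(t_d)$ for loops $\gamma_i$ whose classes freely generate $\pi_1(G)$ (citing the construction in the proof of \cite[lemma 4.3]{08pOP}), checks that $f_*$ sends the standard basis $[\delta_i]$ of $\pi_1(\mathbb{T}^d(M))$ to $[\gamma_i]$, notes that all higher homotopy groups of both spaces vanish by Corollary \ref{hgab}, and concludes via the o-minimal Whitehead theorem \cite[theorem 5.6]{08pBaO}. The only cosmetic difference is that you argue $f_*$ is an isomorphism on $\pi_1$ via surjectivity between free abelian groups of equal rank, whereas the paper takes the $[\gamma_i]$ to be a free basis so that $f_*$ visibly carries a basis to a basis.
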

\begin{proof}
Consider the following map, defined in the proof of
\cite[lemma 4.3]{08pOP}:
\[
f\,:\,\mathbb{T}^d(M)
\ni (t_1,\ldots,t_d)
\mapsto
\gamma_1(t_1)+\ldots+\gamma_d(t_d)
\in G,
\]
where $\ominpi{1}{G}$ is the abelian group freely generated by
$[\gamma_1],\ldots,[\gamma_d]$.  Then, clearly   $f$ induces an isomorphism between  $\ominpi{1}{\mathbb{T}^d(M)}$ and
$\ominpi{1}{G}$.  To see the latter, consider  for  each $i=1,\dots,d$, the loop $\delta_i\colon [0,1]\to \mathbb{T}^d(M)$ defined by $\delta_i(t)= (0,\dots,\stackrel{i}{t},\dots,0)$ if $0\leq t<1$ and $\delta_i(1)=\delta_i(0)$. Hence, $\ominpi{1}{\mathbb{T}^d(M)}=\langle [\delta_1], \dots [\delta_d]\rangle$  and so the map $f$ induces on the fundamental groups is $f_*([\delta_i])=[\gamma_i]$, for each $i=1,\dots,d$, which is an isomorphism.

Since by theorem \ref{hgab} all the higher homotopy groups of
both $G$ and $\mathbb{T}^d(M)$ are $0$, $f$ induces  an isomorphism on them
as well (in a trivial way), hence, by the o-minimal version of Whitehead
theorem proved in \cite[theorem 5.6]{08pBaO}, $f$ is an homotopy equivalence.
\end{proof}

By \cite[theorem 1.1]{04EO}, and applying duality, we have
$\H{i}{G}{\Q}\cong \Q^{\binom{d}{i}}$ for each $i\ge0$. Here we improve
that result by proving the following.

\begin{corollary}
Let $G$ be a definably connected definably compact $d$-dimensional abelian group. Then the o-minimal homology group 
$\H{i}{G}{\Z}\cong \Z^{\binom{d}{i}}$, for each $i\ge 0$.
\end{corollary}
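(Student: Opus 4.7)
The plan is to chain together the homotopy equivalence provided by theorem \ref{th2} with the well-known homology of the classical torus, using o-minimal homotopy invariance and the invariance of o-minimal (co)homology under base change for sets defined without parameters.

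First I would invoke theorem \ref{th2} to obtain a definable homotopy equivalence $f\colon\mathbb{T}^d(M)\to G$. Since the o-minimal singular homology functor sends definably homotopic maps to equal maps on homology (and therefore sends definable homotopy equivalences to isomorphisms), this immediately gives
\[
\H{i}{G}{\Z}\cong \H{i}{\mathbb{T}^d(M)}{\Z}
\]
for every $i\ge 0$.  So the problem reduces to computing the integral o-minimal homology of $\mathbb{T}^d(M)$.

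Next I would compute $\H{i}{\mathbb{T}^d(M)}{\Z}$ by transfer to the classical case.  The torus $\mathbb{T}^d(M)=[0,1)^d$ is a semialgebraic set defined without parameters, and the same holds for any simplicial model obtained from the standard triangulation of the $d$-torus.  Hence, by the comparison/transfer theorem for o-minimal singular homology with constant coefficients on parameter-free semialgebraic sets (so that $\H{i}{|K|(M)}{\Z}\cong\H{i}{|K|(\R)}{\Z}$ for any closed semialgebraic simplicial complex $K$ with rational vertices), we reduce to the classical computation
\[
\H{i}{\mathbb{T}^d(\R)}{\Z}\cong \Z^{\binom{d}{i}},
\]
which is standard (for instance, by iterated application of the Künneth formula to $\mathbb{T}^d(\R)=(S^1)^d$, whose factors have torsion-free homology).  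Combining the two isomorphism chains yields the corollary.

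The only mildly delicate step is the transfer from $M$ to $\R$; everything else is formal.  If one preferred to avoid an external transfer result, one could instead argue entirely inside the o-minimal category by writing $\mathbb{T}^d(M)$ as a $d$-fold product of circles and applying an o-minimal Künneth formula together with the fact that $\H{i}{S^1(M)}{\Z}$ is $\Z$ for $i=0,1$ and zero otherwise; but the transfer route is shorter and already implicit in the machinery cited throughout the paper.
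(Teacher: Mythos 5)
Your proposal is correct and follows essentially the same route as the paper: apply theorem \ref{th2} together with homotopy invariance of o-minimal homology to reduce to $\mathbb{T}^d(M)$, then transfer to $\mathbb{T}^d(\R)$ (the paper cites \cite[proposition 3.2]{02BO} for exactly this comparison step) and use the classical computation. No gaps.
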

\begin{proof}
By theorem \ref{th2}, for each $i\ge 0$ we have that $\H{i}{G}{\Z}\cong\H{i}{\mathbb{T}^d(M)}{\Z}$, and by
\cite[proposition 3.2]{02BO} 
$\H{i}{\mathbb{T}^d(M)}{\Z}$ is isomorphic to
$\H{i}{\mathbb{T}^d(\R)}{\Z}$, which in turn is  isomorphic to $\Z^{\binom{d}{i}}$.
\end{proof}

Next we proceed to study the o-minimal homotopy groups of definably
compact  (noncommutative) groups. Towards this aim we first prove some
results concerning   homogeneous spaces of the type $G/H$, where $G$ is a
definable  group (not necessary definably compact) and $H$ is a definable
subgroup of $G$.  Notice that by \cite[corollary 2.14]{00PPS} such
$G/H$ can be equipped with a definable manifold topology so that the
canonical action of $G$ on $G/H$ is continuous. Moreover, by
\cite[theorem 4.3]{06B}, this topology coincides with the quotient topology
(inherited by that of $G$), which in turn coincides with the definable
group topology -- provided $H$ is normal -- of $G/H$. Notice also that $H$
being closed in $G$, the coset space $G/H$ is a regular definable space,
and hence we can consider its definable manifold topology induced by that
of the ambient space.

\bfa Let $F$ be the functor  from the category of  definably compact  groups to the category of compact Lie groups which sends  $G$ to $G/G^{00}$.
\begin{enumerate}
\item[(i)]  $F$ preserves dimension  and connectedness (each concept in its category).

\item[(ii)] $F$ is exact {\em i.e.} transforms  exact sequences in exact sequences. 
\end{enumerate} 
\efa
\bp By \cite[theorem 8.1]{08HPP} and \cite[theorem 5.2]{07B}, respectively.
\ep 

Our next result says that the functor $F$ also preserves the homotopy
groups.

\bt\label{Lietransfer} Let $G$ be a definably connected definably compact group.
Then $\pi_n(G) \cong \pi_n(G/G^{00})$ for all $n$.
\et
\bp
{\it Case $G$ abelian.} Let $d$ be the dimension of $G$.
By \cite[theorem 1.1]{04EO} and corollary \ref{hgab}  above, we have $\pi_1(G) \cong\Z^d$
and $\pi_n(G)=0$ for each $n>1$, respectively. On the other hand, since
$G/G^{00}$ is an abelian  compact Lie group of dimension $d$, we also have
$\pi_1(G/G^{00})\cong\Z^d$  and for $n>1$,  $\pi_n(G/G^{00})=0$. 

{\it Case $n=1$.} First note that  by the o-minimal
Poincar\'e-Hurewicz theorem (\cite[theorem 5.1]{04EO}), and the corresponding  classical result, it suffices to prove that $H_1(G) = H_1(G/G^{00})$.
On the other hand, by \cite[corollary 5.2]{07pB} and
\cite[remark 7.3]{07B}, the singular cohomology groups  $H^1(G; L)$ and $H^1(G/G^{00}; L)$  are isomorphic for any coefficient group L. Hence, by the universal coefficient theorem (UCT) for cohomology, we are done in this case. The details are as follows: The UCT says that for any chain complex $\mathcal{C}$, for any abelian group $L$ and for any $n> 0$,  if $H_{n-1}(\mathcal{C})$ is free then $H^n(\mathcal{C};L) \cong Hom(H_n(\mathcal{C}); L)$.  Applying the UCT for $n=1$ to the (honest) chain complex associated to the definable singular simplexes on our definable group $G$ we get $H^1(G,L) \cong Hom(H_1(G), L)$.  Hence, by the corresponding result for the singular chain complex of $G/G^{00}$, $Hom(H_1(G);L)\cong Hom(H_1(G/G^{00}); L)$ for any abelian group $L$. It follows that $H_1(G) \cong H_1(G/G^{00})$, since both groups $H_1(G)$ and $H_1(G/G^{00})$ are abelian and finitely generated.

Before we prove the result in the remaining cases we have the following.
\begin{claim} \label{claim2}
Let $G$ and $H$ be two definably connected definably compact groups. Suppose  $G$ is a finite extension of $H$, Then,  for every $n>1$, if  $\pi_n(H) \cong \pi_n(H/H^{00})$ then $\pi_n(G) \cong \pi_n(G/G^{00})$. 
\end{claim}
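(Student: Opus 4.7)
The plan is to use the fact that finite coverings induce isomorphisms on higher homotopy groups, both in the definable and in the classical Lie category, and to connect the two sides through the exact functor $F$. As $G$ is a finite extension of $H$, I would fix a finite definable normal subgroup $K$ of $G$ with $G/K \cong H$, and let $p \colon G \to H$ denote the quotient map.

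On the definable side, by Lemma \ref{quotienttobundle} the map $p$ is a definable fibre bundle; its fibre $K$ being finite, $p$ is in fact a definable covering map. I would then invoke \cite[corollary 4.11]{08pBaO}, already used in the proof of Corollary \ref{hgab}, to conclude that $\pi_n(G) \cong \pi_n(H)$ for every $n > 1$.

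To transfer the picture to the Lie category I would apply the exact functor $F$ to the short exact sequence $1 \to K \to G \to H \to 1$, obtaining a short exact sequence $1 \to K/K^{00} \to G/G^{00} \to H/H^{00} \to 1$ of compact Lie groups. Since $K$ is finite, the trivial subgroup of $K$ is already a type-definable subgroup of bounded (finite) index, so by minimality $K^{00} = \{e\}$ and $K/K^{00} \cong K$ is finite discrete. Hence the induced map $G/G^{00} \to H/H^{00}$ is a finite covering of connected Lie groups (connectedness of $G/G^{00}$ is part of the preceding fact on $F$), and classical covering space theory yields $\pi_n(G/G^{00}) \cong \pi_n(H/H^{00})$ for all $n > 1$. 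Chaining this with $\pi_n(G) \cong \pi_n(H)$ and the hypothesis $\pi_n(H) \cong \pi_n(H/H^{00})$ would complete the proof. I do not foresee any serious obstacle: the only point requiring explicit verification is that $K^{00} = \{e\}$ for finite $K$, and this is immediate from the definition of $G^{00}$ as the smallest type-definable subgroup of bounded index.
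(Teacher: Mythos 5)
Your proposal is correct and follows essentially the same route as the paper: identify $G\to H$ as a definable covering map (the paper cites \cite[proposition 2.11]{04EO} directly rather than going through the fibre-bundle lemma), apply \cite[corollary 4.11]{08pBaO} to get $\pi_n(G)\cong\pi_n(H)$ for $n>1$, and use the exactness of $F$ to see that $G/G^{00}\to H/H^{00}$ is a finite covering of compact Lie groups, whence the classical isomorphism on higher homotopy. Your explicit check that $K^{00}=\{e\}$ for finite $K$ is a harmless elaboration of the paper's one-line appeal to preservation of exactness.
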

\bp
By \cite[proposition 2.11]{04EO}, the onto homomorphism  $G\to H$ with
finite kernel is a definable covering map.  Then, $\pi_n(G) \cong
\pi_n(H)$ for any $n>1$ by \cite[corollary 4.11]{08pBaO}.
By preservation of exactness,  the induced map from $G/G^{00}$ to $H/H^{00}$ is also an onto homomorphism with finite kernel, hence a covering map and so  $\pi_n(G/G^{00}) \cong \pi_n(H/H^{00})$, for any $n>1$.
\ep

{\it Case $G$  definably semisimple and $n>1$.} Since the centre of $G$ is
finite, $G$ is  a finite extension of 
the definably semisimple centreless group
$G/Z(G)$. By claim \ref{claim2},  we may consider $G$ is centreless. On the
other hand, we may assume by results  in  \cite[theorem 4.1]{00PPS} and  \cite[proof of theorem 5.1(3)]{02PPS} (see  \cite[theorems 5.3, 4.2]{08O}) that $G=G(M)$ is a semialgebraic group over the real algebraic numbers. By
\cite[corollary 4.4]{08pBaO},  $\pi_n(G(M)) \cong\pi_n(G(\R))$, for every
$n$. But $G(\R)$ is $G/G^{00}$ by  the proof of \cite[proposition 3.6]{04P}.

{\it General case.} Let $Z$ be the centre of $G$. Then, by
\cite[corollary 5.4]{99PS},  the group $G/Z$ is
definably semisimple, hence the result holds for $G/Z$.  By Corollary
\ref{fibration} the projection map $G\to G/Z$ is a definable fibration.
Therefore,  by \cite[theorem 4.9]{08pBaO}, for each $n\geq 2$, the o-minimal homotopy groups $\pi_n(G,Z)$
and $\pi_n(G/Z)$ are isomorphic; and, hence, the o-minimal
homotopy sequence of the pair  $(G,Z)$ is the following long exact
sequence (see \cite[section 4]{08pBaO})
\[\cdots\to\pi_{n+1}(G/Z)\to\pi_n(Z)\to\pi_n(G)\to\pi_n(G/Z)\to\pi_{n-1}(Z)\to\cdots\]
On the other hand, using the exactness of the    
functor to the Lie category we have that $F(Z)$ is a closed normal
subgroup of $F(G)$, so the projection map
$F(G)\to F(G)/F(Z)(\cong F(G/Z))$ is a fibration and hence we have the 
exact sequence
\[\cdots\to\pi_{n+1}(F(G/Z))\to\pi_n(F(Z))\to\pi_n(F(G))\to\pi_n(F(G/Z))\to\cdots\]
Since $Z$ and $F(Z)$ are abelian, we have $\pi_n(Z) = 0$ and $\pi_n(F(Z)) = 0$ for all $n\geq 2$. So for $n\geq 3$, $\pi_n(G) \cong \pi_n(G/Z)$ and $\pi_n(F(G)) \cong \pi_n(F(G/Z))$. By the previous case $\pi_n(F(G/Z)) \cong \pi_n(G/Z)$. For $n=2$, recall that  the second homotopy group of a  compact Lie group is trivial, so $\pi_2(F(G/Z)) = 0$, and therefore also $\pi_2(G/Z) = 0$ (by the semisimple case).  Since $\pi_2(Z)=0$, it follows that  also $\pi_2(G)=0$. 
\ep 

\br Let $k$ be a characteristic zero field. Then $H_n(G;k)\cong H_n(G/G^{00};k)$, for each $n$.
\er
\bp By a similar argument as the one done in the above proof with the UCT.  Indeed, we can use the following facts:
 (i) the singular cohomology groups  $H^1(G; L)$ and $H^1(G/G^{00}; L)$
are isomorphic for any coefficient group L (\cite[corollary 5.2]{07pB});
(ii) the UCT;
(iii) the fact that $H_n(G;k)$ are free, and (iv) $H_n(G;k)$ are
finite-dimensional  $k$-vector spaces.
\ep

\section{Universal cover}

\bfa\label{unicov} Given a definably connected  group $G$
 there is a {\em
universal covering homomorphism  $p:\uc{G}\to G$} such that:
\begin{enumerate}
\item[(i)] $p$ is a locally definable covering map with $ker\,p=\topi{1}{G}$;

\item[(ii)] $\uc{G}$ is a locally definable regular paracompact group;

\item[(iii)] $\uc{G}$ is  simply connected, and 

\item[(iv)] $\uc{G}$ is unique up to locally definable isomorphisms (\,{\em i.e.}, given
$p_1\,:\,\uc{G}_1\to G$ satisfying the above conditions, $\uc{G}_1$ is
isomorphic to $\uc{G}$ via a locally definable isomorphism). 
\end{enumerate}
\efa
\bp See \cite{07EE} and  \cite[Fact 6.13]{08pBaO2}.
\ep
\bprop\label{ucss} Let $G$ be a definably compact  semisimple group. Then,
the universal covering group of $G$ is a definably compact definable
group.\eprop
\bp
It suffices to prove that the fundamental group of $G$ is finite. Indeed,
by  \cite[theorem 3.11]{07EE}
$\uc{G}$ is  a locally definable group and by the above
fact $\uc{G}/\topi{1}{G}\cong G$.

If $G$ is semisimple and centreless, then we may suppose that $G=G(M)$ is
a semialgebraic group, hence $\topi{1}{G(M)}\cong\topi{1}{G(\R)}$ by
\cite[corollary 2.10]{02BO}. But $G(\R)$ is a semisimple compact Lie
group and hence, by a theorem of Weyl, it has a finite fundamental group (see {\em e.g.},  \cite[theorem 23.1]{04Bu}).

If $G$ is semisimple then  $Z(G)$ is finite and $G/Z(G)$ is semisimple
and centreless. By \cite[proposition 2.11]{04EO} the projection
$p:G\to G/Z(G)$ is a definable covering map, and by
\cite[corollary 2.8]{04EO} the map that $p$ induces between the fundamental groups is
injective.  We have already proved that $\topi{1}{G/Z(G)}$ is finite,
and hence so it is $\topi{1}{G}$.
\ep 

\bt Let  $G$ be a definably compact abelian group. Then the universal covering group  $\uc{G}$ is 
contractible in the category of locally definable spaces.
\et
\bp
We first prove that  all the homotopy groups  of $\uc{G}$ are trivial.
Indeed, by  Fact \ref{unicov}, $\pi_1(\uc{G})=0$ and  $p:\uc{G}\to G$ is a
covering map.  By \cite[corollary 6.12]{08pBaO2} $p$
induces isomorphism on the higher homotopy groups,
hence,
by theorem \ref{hgab}, we also have that $\pi_n(\uc{G})$ is trivial for
each $n\ge 1$.  Finally, by the o-minimal version of Whitehead theorem
(see \cite[theorem 6.16]{08pBaO2})
$\uc{G}$ is locally definably contractible. 
\ep

\end{document}